\documentclass[12pt]{amsart}
\usepackage[top=1in, left=1in, right=1in, bottom=1in]{geometry} %
\usepackage{latexsym, amsmath, amscd, amsthm}
\usepackage{mathrsfs}
\usepackage{enumerate}

\usepackage[utf8]{inputenc}
\usepackage[
uniquename=false,
backend=biber,
sortcites=true,
style=alphabetic,
giveninits=true]{biblatex}
\addbibresource{sources.bib}

\usepackage[svgnames]{xcolor}
\usepackage{tikz}
\usetikzlibrary{decorations.markings,backgrounds,hobby,knots,calc}
\usepackage{pgfplots}\pgfplotsset{compat=1.13}
\usepackage{graphicx}
\usepackage{subcaption}

\usepackage[mode=image]{standalone}

\usepackage{hyperref}

\newcommand{\Z}{\mathbb{Z}} \newcommand{\N}{\mathbb{N}}

\newcommand{\R}{\mathbb{R}}

\newcommand{\FlatKnotDia}{\mathscr{K}}
\newcommand{\KnotShad}{\FlatKnotDia}

\newcommand{\Exp}{\mathbb{E}}
\newcommand{\Prb}{\pi}
\newcommand{\Trprb}{P}

\DeclareMathOperator{\Aut}{aut}

\DeclareMathOperator{\tp}{tp}

\DeclareFieldFormat{url}{%
  \iffieldundef{doi}{%
    \mkbibacro{URL}\addcolon\space\url{#1}%
  }{%
  }%
}

\DeclareFieldFormat{urldate}{%
  \iffieldundef{doi}{%
    \mkbibparens{\bibstring{urlseen}\space#1}%
  }{%
  }%
}

\makeatletter

\newrobustcmd*{\parentexttrack}[1]{%
  \begingroup
  \blx@blxinit
  \blx@setsfcodes
  \blx@bibopenparen#1\blx@bibcloseparen
  \endgroup}

\AtEveryCite{%
  }

\makeatother

\DeclareMathOperator{\RI}{RI}
\DeclareMathOperator{\RII}{RII}
\DeclareMathOperator{\RIII}{RIII}

\newcommand{\RIp}{\RI^+}
\newcommand{\RIm}{\RI^-}

\newcommand{\RIIp}{\RII^+}
\newcommand{\RIIm}{\RII^-}

\usepackage{contour}


\makeatletter
\newtheorem*{rep@theorem}{\rep@title}
\newcommand{\newreptheorem}[2]{%
\newenvironment{rep#1}[1]{%
 \def\rep@title{#2 \ref{##1}}%
 \begin{rep@theorem}}%
 {\end{rep@theorem}}}
\makeatother

\newtheorem{theorem}{Theorem}
\newreptheorem{theorem}{Theorem}

\newtheorem*{lemma*}{Lemma}
\newtheorem{corollary}[theorem]{Corollary}

\pgfkeys{
/pgf/decoration/.cd,
pre fraction/.style={pre length=#1*\pgfmetadecoratedpathlength},
post fraction/.style={post length=#1*\pgfmetadecoratedpathlength}
}

\tikzset{->-/.style={decoration={ markings, mark=at position #1
      with {\arrow{>}}},postaction={decorate}}}

\newcommand{\arcthick}{4}
\newcommand{\arclenfrac}{1.2}
\newcommand{\arclenfracarc}{0.2}

\tikzset{startarc/.style n args={2}{
    decoration={curveto, raise=#1,
    post=moveto, post fraction=\arclenfracarc},
    preaction={decorate,draw,#2,line width=\arcthick pt}}}

\tikzset{endarc/.style n args={2}{
    decoration={curveto, raise=#1,
    pre=moveto, pre fraction=\arclenfracarc},
    preaction={decorate,draw,#2,line width=\arcthick pt}}}

\tikzset{starthalfarc/.style n args={3}{
  decoration={curveto, raise=#1,
    post=moveto, post fraction=#2*\arclenfrac},
  decorate,
  line width=\arcthick pt,
  draw=#3}}

\tikzset{endhalfarc/.style n args={3}{
  decoration={curveto, raise=#1,
    pre=moveto, pre fraction=#2*\arclenfrac},
  decorate,
  line width=\arcthick pt,
  draw=#3}}

\tikzset{fwdarcs/.style n args={4}{
    preaction={starthalfarc={#1}{#2}{#3}},
    preaction={endhalfarc={#1*-1}{#2}{#4}}}}

\newcounter{marknumber}
\pgfplotsset{
    error bars/every nth mark/.style={
        /pgfplots/error bars/draw error bar/.prefix code={
            \pgfmathtruncatemacro\marknumbercheck{mod(floor(\themarknumber/2),#1)}
            \ifnum\marknumbercheck=0
            \else
                \begin{scope}[opacity=0]
            \fi
        },
        /pgfplots/error bars/draw error bar/.append code={
            \ifnum\marknumbercheck=0
            \else
                \end{scope}
            \fi
            \stepcounter{marknumber}    
        }
      }
    }

\newcounter{marknumberx}
\pgfplotsset{
    error bars/every nth markx/.style={
        /pgfplots/error bars/draw error bar/.prefix code={
            \pgfmathtruncatemacro\marknumberxcheck{mod(floor(\themarknumberx/2),#1)}
            \ifnum\marknumberxcheck=0
            \else
                \begin{scope}[opacity=0]
            \fi
        },
        /pgfplots/error bars/draw error bar/.append code={
            \ifnum\marknumberxcheck=0
            \else
                \end{scope}
            \fi
            \stepcounter{marknumberx}    
        }
      }
    }

\newif\iflegend
\legendfalse

\newif\ifcustomxlabel
\customxlabelfalse

\newif\ifcustomtitle
\customtitlefalse

\newif\ifallmaps
\allmapstrue



\begin{document}
\title[]{A Markov Chain sampler for plane curves}
\author{Harrison Chapman}
\email{hchaps@gmail.com}
\address{Department of Mathematics\\
  Colorado State University, Fort Collins CO}
\author{Andrew Rechnitzer}
\email{andrewr@math.ubc.ca}
\address{Department of Mathematics\\
  University of British Columbia, Vancouver BC}
\date{\today}

\begin{abstract}
  A plane curve is a knot diagram in which each crossing is replaced by a 4-valent vertex, and so are dual to a subset of planar quadrangulations. The aim of this paper is to introduce a new tool for sampling diagrams via sampling of plane curves. At present the most efficient method for sampling diagrams is rejection sampling, however that method is inefficient at even modest sizes. We introduce Markov chains that sample from the space of plane curves using local moves based on Reidemeister moves. By then mapping vertices on those curves to crossings we produce random knot diagrams. Combining this chain with flat histogram methods we achieve an efficient sampler of plane curves and knot diagrams. By analysing data from this chain we are able to estimate the number of knot diagrams of a given size and also compute knotting probabilities and so investigate their asymptotic behaviour.
\end{abstract}
\maketitle

\section{Introduction}
\label{sec:introduction}

\subsection{Background}
\label{sec:background}

A beautiful aspect of knot theory is that it brings together many different aspects of mathematics and leverages their tools so efficiently. An apt example of this phenomenon are knot diagrams, wherein the entanglement of a smooth string in space is studied as a combinatorial cartoon. From this view, any of the infinite manipulations of a loop of string are simply sequences of a finite number of diagram operations called Reidemeister moves~\parencite{Alexander_1926,Reidemeister1948}. This combinatorial regime also serves an essential role in knot identification; many knot polynomials are calculated more naturally for diagrams than for space curves~\parencite{Ewing1997}. Other invariants, like the important crossing number of a knot, are defined in terms of diagrams.

\begin{figure}[hbtp!]
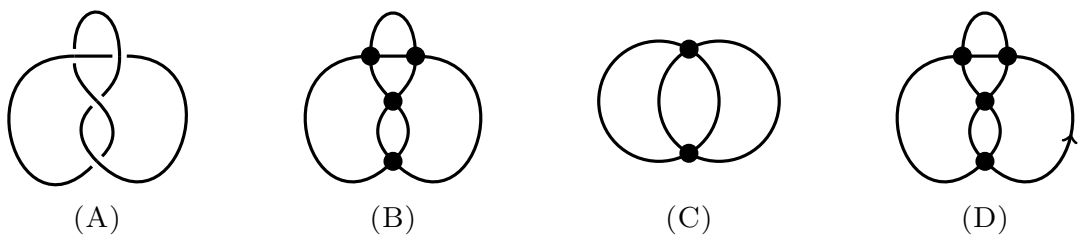

  \centering
  \begin{subfigure}{0.23\linewidth}
    \centering
    \includestandalone{figure_8_big}
    \caption{}
    \label{fig:knotdia}
  \end{subfigure}
  \begin{subfigure}{0.23\linewidth}
    \centering
    \includestandalone{figure_8_big_ci}
    \caption{}
  \end{subfigure}
  \begin{subfigure}{0.23\linewidth}
    \centering
    \includestandalone{hopf_ci}
    \caption{}
  \end{subfigure}
  \begin{subfigure}{0.23\linewidth}
    \centering
    \includestandalone{figure_8_big_ci_root}
    \caption{}
  \end{subfigure}
  \caption{By replacing crossings with vertices, knot diagrams (A) are mapped onto plane curves (B). However, not all 4-valent plane graphs correspond to knots. For example, the Hopf-link (C) maps to a 4-valent graph. To avoid complications caused by symmetries, we study rooted plane curves, in which one edge is selected and assigned an orientation (D).}
  \label{fig:knotty things}
\end{figure}

Knot theory is an important tool in many applied disciplines. Ring polymers exhibit knotting~\parencite{Frisch1961,Trigueros2001}, which affects their function~\parencite{Buck04} and their chemical properties~\parencite{Vanderzande1995}. Knotted configurations of polymers are studied through \emph{random knot models}, where knots are sampled from some probability distribution with the aim of modeling physical behavior; for a good overview see~\parencite{Orlandini07}. A canonical model of random knotting is that of self-avoiding polygons on the cubic lattice \(\Z^3\)~\parencite{Sumners_1988}. A self-avoiding polygon is constructed by embedding a simple closed curve into \(\Z^3\) and by sampling these objects uniformly at random, one obtains a distribution over the space of knot embeddings. A surprising amount is still unknown for all but the most simple of such models. For example, it is still unproven (despite overwhelming evidence) that the exponential growth rates of knotted polygons of fixed knot type are \emph{independent} of knot type~\parencite{Rensburg2011}.

As knot diagrams are more naturally suited for the study of invariants, in~\parencite{Cantarella2015}, the first author together with Cantarella and Mastin proposed the \emph{random diagram model}. In this model, one samples a \emph{random knot} by, for a fixed number of crossings \(n\), picking one of the finite number of knot diagrams with \(n\) crossings uniformly. The diagram model behaves like many other physically motivated models in that unkotted diagrams are exponentially rare~\parencite{Frisch1961,Delbruck1962,Sumners_1988,Diao1995,Chapman2016}. At the same time, this diagram model has advantages over other models of knotting. It is possible, for example, to show, via a pattern theorem, that unknot diagrams almost certainly contain slipknots~\parencite{Chapman2016:sub}, a result which remains a conjecture for unknotted self-avoiding polygons and Gaussian polygons~\parencite{Millett2010}.

Another major problem of combinatorial models of knots, is that the underlying objects are difficult to enumerate. The diagram model is no different --- the best algorithms for enumerating knot diagrams require time that grows exponentially with the number of crossings \parencite{ZinnJustin2009}. If an efficient enumeration method were to exist, then this could be readily adapted to give a random sampling method (see, for example, \parencite{Flajolet2009}). In the absence of such a method, the most obvious approach has been to generate random 4-valent maps and then randomly assign crossings (see Figure~\ref{fig:knotty things}). However, 4-valent maps corresponding to knots are exponentially rare. 
Experiments suggest that roughly only 1\% of samples are accepted for \(60\)-crossing diagrams and a prohibitively small 0.01\% of samples are accepted for \(150\)-crossing diagrams \parencite{Chapman2016}. This makes rejection sampling of 4-valent maps extremely inefficient at even moderate sizes. One way to overcome this is to manipulate a sampled 4-valent map until a knot diagram is obtained~\parencite{Diao2012,Dunfield2014talk}, however the resulting space of knots is not uniform. 

The absence of an efficient random knot diagram sampler is a major impediment to the study of random knots. The aim of this paper is thus to describe a new efficient method to sample knot diagrams \emph{directly} with uniform probability using Metropolis style Markov chain Monte Carlo (MCMC) sampling~\parencite{Metropolis_1953}. Such Markov chains have been used to study other models of random knotting, particularly on the simple cubic lattice. Foremost among these are the pivot algorithm~\parencite{lal1969} and the BFACF algorithm~\parencite{Berg1981,deCarvalho1983,deCarvalho1983_2}. The former is extremely efficient~\parencite{madras1988}, while the latter has the advantage of conserving topology~\parencite{Rensburg91}. Guitter and Orlandini~\parencite{Guitter99} augmented the BFACF algorithm with Reidemeister moves to study a model of flat-knots on the square-diagonal lattice.

This paper focuses on sampling \emph{plane curves}, which can be thought of as knot diagrams without crossing-sign information (see Figure~\ref{fig:knotty things} and Section~\ref{sec:prelims}). Each plane curve of \(n\)-vertices maps to a unique set of \(2^n\) knot diagrams. Consequently if we can sample plane curves uniformly, then we can also sample knot diagrams uniformly. To avoid complications caused by symmetries, we sample from the space of rooted diagrams (explained below). 

The main result of the paper is a Markov Chain over the space of plane curves. By selecting transition probabilities we can sample from this chain with a Boltzmann distribution.
\begin{theorem}\label{thm:ergodic}
  Let $D$ be a plane curve with \(1 \leq n\) vertices and let $\mu$ is the exponential growth rate of plane curves. The Markov chain described in Section~\ref{sec:boltzmann} has stationary distribution given by 
  \[ \Prb(D) \propto z^{n}, \]
  provided $0\leq z < \mu^{-1}$. Consequently, plane curves of a fixed size are sampled uniformly.
\end{theorem}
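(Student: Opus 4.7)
The plan is to establish three properties of the Markov chain and then invoke the standard ergodicity theorem for countable-state Markov chains. Namely, I would show that (i) the chain satisfies detailed balance with respect to the candidate stationary distribution $\Prb(D) \propto z^{n(D)}$, where $n(D)$ is the number of vertices, (ii) it is irreducible on the space of rooted plane curves, and (iii) it is aperiodic. These three conditions, combined with finiteness of the normalizing sum $\sum_D z^{n(D)}$, imply that $\Prb$ is the unique stationary distribution. Uniformity on each size class is then automatic, since $\Prb(D)$ depends only on $n(D)$.

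Detailed balance I would verify move-by-move using the Metropolis--Hastings construction. The chain's transitions come from a finite family of local moves patterned on Reidemeister moves, namely the $\RIp/\RIm$ pair, the $\RIIp/\RIIm$ pair, and an $\RIII$ move. For a proposal $D \to D'$ that changes the vertex count by $\Delta n \in \{-2,-1,0,+1,+2\}$, the acceptance probability is designed so that
\[
\Prb(D)\,q(D \to D')\,a(D,D') \;=\; \Prb(D')\,q(D' \to D)\,a(D',D),
\]
which after cancellation reduces to setting $a(D,D')/a(D',D) = z^{\Delta n}\, q(D' \to D)/q(D \to D')$, exactly the Metropolis recipe. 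The bookkeeping then amounts to enumerating, for each move type, the number of admissible sites in $D$ and in $D'$, so that forward and reverse proposal probabilities can be written down explicitly; because the move is local and reversed by a unique counterpart at a specific site in $D'$, this is mechanical but must be done once per move type.

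The main obstacle is irreducibility: showing that the graph whose vertices are rooted plane curves and whose edges are the permitted moves is connected. Because plane curves carry no crossing-sign data, one cannot simply quote the classical fact that Reidemeister moves generate ambient isotopy for knots; the argument must be intrinsically combinatorial. My strategy would be to fix a canonical representative in each vertex-number class (for instance, a standard nested stack of $\RIp$ loops, or the empty curve reached through $\RIm$) and show by induction on $n(D)$ that any plane curve can be reduced to this representative. The inductive step requires exhibiting, for each $D$ with $n(D) \geq 1$, a sequence of $\RIII$ moves (and possibly a preparatory $\RIIp$ followed by a later $\RIIm$) that migrates some vertex or bigon into a position where an $\RIm$ or $\RIIm$ can be applied. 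Justifying the existence of such a reduction is the technical heart of the proof and is naturally extracted as its own lemma.

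Aperiodicity follows at once: whenever the Metropolis ratio is less than one, the proposed move is rejected with positive probability, so the chain has a self-loop at $D$ and hence period $1$. The hypothesis $0 \leq z < \mu^{-1}$ enters precisely to guarantee that the candidate distribution is normalizable, since $\sum_D z^{n(D)}$ is the generating function of rooted plane curves evaluated at $z$, which converges exactly on this range by the definition of the exponential growth rate $\mu$. With irreducibility, aperiodicity, detailed balance, and normalizability all in hand, the standard existence-and-uniqueness theorem for positive-recurrent irreducible aperiodic Markov chains yields that $\Prb$ is the unique stationary distribution, proving the theorem.
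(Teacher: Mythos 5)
Your skeleton --- detailed balance checked move-by-move, irreducibility, aperiodicity via rejection self-loops, and normalizability of $\sum_D z^{|D|}$ from $z<\mu^{-1}$ --- is exactly the paper's route, and your Metropolis bookkeeping and the role of the convergence radius are both correct. The one substantive divergence is irreducibility. You propose to prove connectivity of the move graph from scratch by an inductive reduction to a canonical representative, and you flag the existence of the reducing sequence as ``the technical heart'' to be extracted as a lemma --- but you never supply it, and supplying it is genuinely hard. The paper instead cites this as a known theorem (Hass--Scott and de~Graaf--Schrijver, restated as Theorem~\ref{thm:trivplanecurve}): every plane curve can be monotonically reduced to the figure-eight twist curve by flat Reidemeister (homotopy) moves, with quantitative bounds later supplied by Chang--Erickson. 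So the gap in your argument is real but closable by citation; attempting to reprove it inline would substantially lengthen the proof for no gain.

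Two smaller omissions. First, your list of transitions leaves out the re-rooting move $[0]$, which the chain needs both for irreducibility on \emph{rooted} curves (Theorem~\ref{thm:trivplanecurve} only connects underlying curves, not rootings) and in the detailed balance check, where the re-rooting transition probability carries the factor $\Aut{D}/(4|D|)$ and one must also handle the degenerate case in which $N=\RIII(D,a)$ happens to coincide with a re-rooting of $D$, so that two transition probabilities must be summed on each side. Second, your observation that forward and reverse proposals cancel site-by-site relies on the fact that the three move pairs change the vertex count by distinct complementary amounts ($\pm1$, $\pm2$, $0$), so at most one move pair can relate a given $D$ and $N$; this is worth stating explicitly, as the paper does, since otherwise the ``unique counterpart'' claim is unjustified.
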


When running this Markov chain, we found that it was very difficult to obtain a good number of samples at large range of lengths. To overcome this problem we modified our transition probabilities based on Wang Landau density of states estimation~\parencite{Wang01}. This method allows us to sample diagrams nearly uniformly in length (while still sampling uniformly within any given length) and additionally provides estimates of the number of plane diagrams. Let $k_\ell$ be the number of rooted plane curves with $\ell$ vertices, and let $g_\ell$ be the estimate of $k_\ell$ from the Wang-Landau algorithm. Then we have the following result.
\begin{theorem}\label{thm:wlergodic}
  Let $N \in \N$ and $D$ be a plane curve with \(1 \leq n \leq L\) vertices. The Markov chain described in Section~\ref{sec:wanglandau} has stationary distribution given by 
  \[ \Prb(D) \propto \frac{1}{g_{n}} \]
Since $g_n \approx k_n$, plane curves are sampled uniformly within a given size, and approximately uniformly across sizes.
\end{theorem}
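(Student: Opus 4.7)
The plan is to follow the same skeleton as the proof of Theorem~\ref{thm:ergodic}: establish that the Markov chain of Section~\ref{sec:wanglandau} is irreducible on the space of rooted plane curves with $1 \leq n \leq L$ vertices, that it is aperiodic, and that the claimed distribution $\Prb(D) \propto 1/g_n$ satisfies detailed balance. Ergodicity together with detailed balance then gives uniqueness of the stationary distribution with the stated form.

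The underlying proposal mechanism is the same set of Reidemeister-based local moves used for Theorem~\ref{thm:ergodic}; only the acceptance probabilities change. So irreducibility and aperiodicity require essentially no new ingredients: aperiodicity follows from rejected proposals producing self-loops, and irreducibility is inherited from the base chain as long as the restriction to curves of size at most $L$ preserves connectivity. This is the first technical checkpoint --- one must verify that any two curves of size at most $L$ can be joined by a sequence of moves that never overflows the size cap. I expect this to follow from the Reidemeister-completeness used in the proof of Theorem~\ref{thm:ergodic}, since any connecting path can be arranged so that it does not unnecessarily inflate the size beyond either endpoint by more than a constant.

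The substantive step is detailed balance. In the Metropolis-Hastings framework one sets the acceptance probability from $D$ (size $n$) to $D'$ (size $n'$) to
\[
  \alpha(D \to D') \;=\; \min\!\left(1,\ \frac{g_{n}\, q(D' \to D)}{g_{n'}\, q(D \to D')}\right),
\]
where $q(\cdot \to \cdot)$ is the symmetric proposal kernel of the base chain from Section~\ref{sec:boltzmann}. A direct computation, identical in form to the one underlying Theorem~\ref{thm:ergodic} but with the weight $z^n$ replaced by $1/g_n$, then gives
\[
  \frac{1}{g_{n}}\, q(D \to D')\, \alpha(D \to D') \;=\; \frac{1}{g_{n'}}\, q(D' \to D)\, \alpha(D' \to D),
\]
which is precisely the detailed-balance condition for $\Prb(D) \propto 1/g_n$. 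Because $g_n$ depends only on the size of $D$ and not on the diagram itself, every curve of a given size receives the same weight, so the chain is uniform on each size class.

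The main obstacle I anticipate is less a technical difficulty than an interpretive one: the Wang-Landau values $g_n$ must be treated as fixed constants for the statement to describe a time-homogeneous Markov chain, whereas in the actual algorithm of Section~\ref{sec:wanglandau} the $g_n$ are updated adaptively as samples are drawn. The theorem should be understood as describing the chain obtained by freezing the current estimates $g_n$, and this point deserves explicit comment before the detailed-balance calculation. The final ``approximately uniform across sizes'' statement is then immediate: since $g_n \approx k_n$ and there are exactly $k_n$ rooted plane curves of size $n$, the total stationary mass assigned to size class $n$ is approximately $k_n / g_n \approx 1$, independent of $n$.
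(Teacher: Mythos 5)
Your proposal is correct and follows essentially the same route as the paper: freeze the tuned values $g_n$, inherit irreducibility under the size cap from the monotone-reduction theorem (Theorem~\ref{thm:trivplanecurve}) exactly as in Theorem~\ref{thm:ergodic}, and verify detailed balance for $\Prb(D)\propto 1/g_n$ via the acceptance ratio $\min\{1,g_n/g_{n'}\}$, which is what your generic Metropolis--Hastings formula reduces to since the base proposal kernel is symmetric. The paper's appendix simply carries out this computation case by case for the five move types using the identity $\min\{1,g_n/g_m\}/\min\{1,g_m/g_n\}=g_n/g_m$, and it makes the same point you do about treating $g_n$ as fixed after tuning.
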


The remainder of Section~\ref{sec:introduction} defines key knot theory concepts and provides some additional context on the problem of enumerating plane curves. Section~\ref{sec:transitions} describes the transitions used by our Markov chain. In Section~\ref{sec:markovchain}, we describe the actual algorithms and prove the main theorem. Section~\ref{sec:data} presents the results of experiments which verify the validity of the main theorem as well as explore the structure of large random knot diagrams. Finally, in the concluding Section~\ref{sec:conclusion} we discuss progress on additional ``diagram Markov chains'' for different types of diagram objects, and some obstructions.

\section{Preliminaries and Definitions}
\label{sec:prelims}

\subsection{Definitions}
\label{sec:definitions}

A \emph{knot} is an embedding \(K: S^1 \hookrightarrow \R^3\) of a loop into Euclidean 3-space. Typically, knots are considered up to ambient isotopy, wherein two knots are equivalent if one can be manipulated as a closed loop into the other, without self-intersection. For clarity, we call a specific loop embedding a \emph{knot} and an equivalence class of knots a \emph{knot type}. Reidemeister's theorem~\parencite{Alexander_1926,Reidemeister1948} transfers this topological theory into a combinatorial one as follows:
\begin{figure}[hbtp! b]
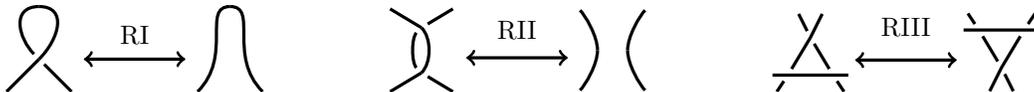

  \centering
    \begin{tabular}{c@{\hspace{4em}}c@{\hspace{4em}}c}
    \centering
    \includestandalone{reidemeister_1}
    & \includestandalone{reidemeister_2}
    & \includestandalone{reidemeister_3}
  \end{tabular}
  \caption{The three Reidemeister moves, RI, RII, RIII.}
  \label{fig:reidemeister}
\end{figure}
A \emph{knot diagram} of a knot \(K\) is a generic projection of the loop in space to the sphere, together with extra information at each double point indicating where one piece of the loop passes over the other (called \emph{crossings}), as in Figure~\ref{fig:knotdia}, up to oriented homeomorphisms of the sphere. Then two knots \(K_1\) and \(K_2\) are equivalent if and only if their \emph{diagrams} are related by a sequence of Reidemeister moves, depicted in Figure~\ref{fig:reidemeister}. A significant advantage of the diagram view is that most \emph{knot invariants}, properties of knots which only depend on their knot type, are naturally computed from their diagram representation~\parencite{Kauffman87,Freyd85}.




There is a natural projection from knot diagrams to a strict subset of 4-valent planar maps called plane curves; simply replace each crossing with a vertex. A \emph{planar map} is a (multi-)graph \(G\), together with an embedding \(\iota\) into the sphere \(S^2\) so that each component of \(S^2 \setminus \iota(G)\) is a topological disk. Necessarily, this means all planar maps are connected. A map is \emph{4-valent} if each vertex has degree 4.

Unfortunately, knot diagrams and plane curves are cumbersome to deal with as a result of potential symmetries. To avoid these complications can asymmetrize by marking one edge with a direction. Such objects are called \emph{rooted}. In particular, each rooted \(n\)-vertex plane curve corresponds to a unique set of exactly \(2^n\) rooted knot diagrams. For a discussion of the techniques and difficulties involved with considering plane curves and knot diagrams with symmetry see~\parencite{Coquereaux16,Cantarella2015,Valette2016}.

It will be useful for computations to consider the following equivalent view of maps. A 4-valent planar map \(D\) with \(n\) vertices can be viewed as a \emph{combinatorial map}~\parencite{Coquereaux16,Chapman2016}, \textit{i.e.} a pair \(D = (\sigma, \tau)\) of permutations of \(4n\) \emph{flags} (sometimes called half-edges or arcs). In this view, \(\sigma\) is a product of \(n\) disjoint cycles of length 4 and \(\tau\) is a product of \(2n\) disjoint cycles of length 2. Each cycle in \(\sigma\) represents a vertex (it permutes the flags attached at each vertex counterclockwise) and each cycle in \(\tau\) represents an edge (it involutes the two flags that form an edge). The cycles of \(\sigma \tau\) correspond to the faces of the map (each permutes the flags of a face clockwise). For 4-valent maps we also have \(\sigma^2 \tau\), whose cycles correspond to orientations of \emph{link components} or Gauss components. Each flag is contained in exactly one vertex, edge, face and component.

Given an flag \(a\), let \(e(a)\) be its edge (\textit{i.e.} cycle in \(\tau\)), \(v(a)\) be its vertex (\textit{i.e.} cycle in \(\sigma\)), and \(f(a)\) be its face (\textit{i.e.} cycle in \(\sigma\tau\)).

\begin{figure}[hbtp!]
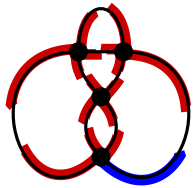

  \centering
  \includestandalone{figure_8_big_ci_arcs}
  \caption{A plane curve, with its flags marked and a root flag chosen (in blue).}
  \label{fig:circleimmex}
\end{figure}

Then the condition that \(D\) is a planar map is precisely that product \(\sigma\tau\) consists of \(n+2\) cycles by Euler's formula. Relaxing this condition and allowing \(\sigma\tau\) to consist of \(k\) cycles makes \(D\) a map on a surface of genus \(g = 1-\frac{k-n}{2}\). We forbid this for our objects, although in general it is interesting to consider maps on an arbitrary fixed surface \(\Sigma\). If \(\sigma^2\tau\) consists of precisely two cycles, each necessarily of length \(n\), then \(D\) is a \emph{plane curve} and each cycle in \(\sigma^2\tau\) corresponds to following the single immersed circle in one of its two possible orientations. Figure~\ref{fig:circleimmex} shows an example plane curve. Figure~\ref{fig:randomdia} shows a random decorated plane curve (knot diagram) and a random decorated 4-valent map (link diagram). The curve condition (that \(\sigma^2 \tau\) has precisely 2 cycles) makes plane curves exponentially rare within the class of all 4-valent maps.

We note that if we relax either the planarity or the curve condition, the problem greatly simplifies. 4-valent planar maps themselves are well-understood, owing in part to Schaeffer's bijection with blossom trees~\parencite{Schaeffer1997}. This has been used to prove a stunning closed formula for the growth rate of alternating link types~\parencite{Sundberg_1998,Thistlethwaite_1998,Zinn_Justin_2002} as well as precise statistics for hyperbolic volumes of random alternating link diagrams~\parencite{Obeidin16}. On the other hand, relaxing planarity and considering curves on arbitrary surfaces leads one to the study of \emph{Gauss codes} (usually depicted as signed chord or \emph{Gauss diagrams}), which themselves are counted and well understood~\parencite{Nowik2009}. 

Denote by \(\KnotShad\) the class of all rooted plane curves indexed by number of vertices, and let \(K(t) = \sum_{n=1}^\infty{k_nt^n}\) be its generating function. For a plane curve \(D\), let \(|D|\) be the number of vertices in \(D\) (equivalently, the size of \(D\)). Then the generating function can also be written as \(K(t) = \sum_{D\in{\KnotShad}}{t^{|D|}}\). The asymptotic behavior of the coefficients \(k_n\) is expected~\parencite{Schaeffer2004,ZinnJustin2009} to be
\begin{equation}
  \label{eqn:asympkn}
 k_n \sim C\mu^nn^{\gamma-2}(1 + O(1/\log n)).
\end{equation}

Neither a closed formula for \(k_n\) nor exact values of \(\mu, \gamma\) are known. Conformal field theory arguments suggest~\parencite{Schaeffer2004} that
\[ \gamma = -\frac{1 + \sqrt {13}}{6}. \] Additionally, it is known that \(\mu\) exists~\parencite{Chapman2016}, with the best numerical estimate~\parencite{ZinnJustin2009} \(\mu \approx 11.416 \pm 0.005\). This sort of asymptotic growth is similar to that of self-avoiding walks and polygons in the cubic lattice \(\Z^3\)~\parencite{Hammersley1961,Madras2013} (with different constants). Indeed a great many combinatorial objects are known to be counted by sequences which have similar exponential growth with power-law correction; see Flajolet and Sedgewick~\parencite{Flajolet2009} for (a great many) examples. It should be noted, however, that conformal field theory arguments~\parencite{Schaeffer2004} suggest the presence of an inverse logarithmic correction to scaling for plane curves. This is contrast with the observed correction of $n^{-\Delta}$ for many objects (again, see Flajolet and Sedgewick~\parencite{Flajolet2009} for many examples with $n^{-1}$ corrections, and also~\parencite{Conway1996} for evidence of a $n^{-3/2}$ correction in self-avoiding walks).

\begin{figure}[hbtp!]
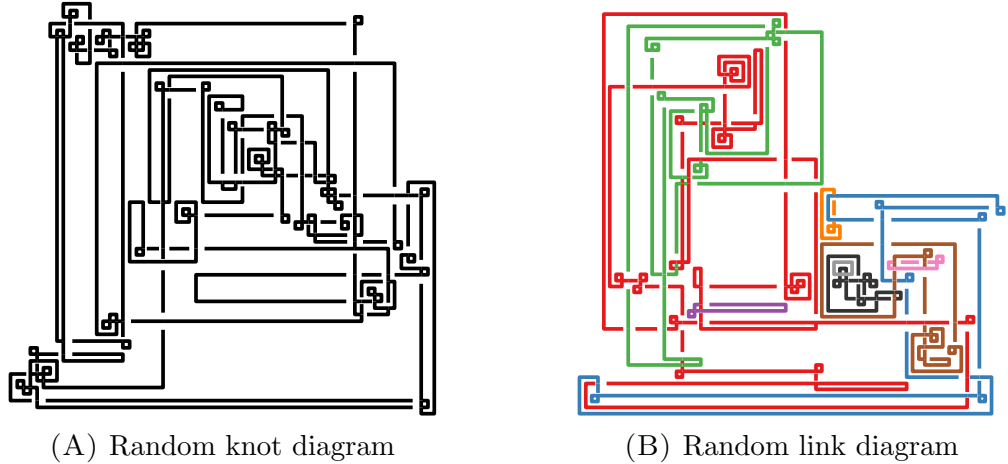

  \centering
  \begin{subfigure}[t]{0.45\textwidth}
    \centering
    \includestandalone{random_100kd}
    \caption{Random knot diagram}
    \label{subfig:kd}
  \end{subfigure}
  \begin{subfigure}[t]{0.45\textwidth}
    \centering
    \includestandalone{random_100ld}
    \caption{Random link diagram}
    \label{subfig:ld}
  \end{subfigure}
  \caption{A random knot diagram and a random link diagram, each of 100 vertices. Different link components are given different colors. These diagrams were sampled uniformly (using a rejection sampler in the case of the knot diagram) using an interface in \texttt{plCurve}~\parencite{PlCurve} to \texttt{PlanarMap}~\parencite{SchaefferPlanarMap}, and graphics were generated using an orthogonal projection algorithm in \texttt{pLink}, part of \texttt{SnapPy}~\parencite{SnapPy}. Knot diagrams become exponentially rare as the number of vertices increases~\parencite{Schaeffer2004,Chapman2016}, so are difficult to sample through rejection.}
  \label{fig:randomdia}
\end{figure}

\subsection{Shadow Reidemeister moves}
\label{sec:transitions}

In the subsection above, we have defined the set of rooted plane curves that we wish to sample. Unfortunately, as noted above, it is difficult to construct a rooted plane curve. Instead, we will describe a Markov chain that produces new plane curves by performing small local changes. The set of plane curves is closed under these manipulations. Further, any two plane curves are linked by a sequence of these changes.

\begin{figure}[hbtp! b]
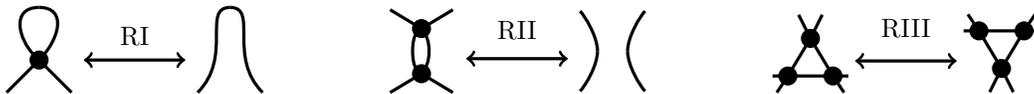

  \centering
    \begin{tabular}{c@{\hspace{4em}}c@{\hspace{4em}}c}
    \centering
    \includestandalone{flat_reidemeister_1}
    & \includestandalone{flat_reidemeister_2}
    & \includestandalone{flat_reidemeister_3}
  \end{tabular}
  \caption{The three flat Reidemeister moves, RI, RII, RIII which act on plane curves. These are the natural analogues of Reidemeister moves on knot diagrams.}
  \label{fig:flat_reidemeister}
\end{figure}


Reidemeister moves in Figure~\ref{fig:reidemeister} are an obvious choice of local changes for knot diagrams, and we use a similar set of moves for plane curves. Define the flat (or shadow) Reidemeister moves (also known as homotopy moves~\parencite{Chang_2017}) to be the same as the Reidemeister moves, except ignoring crossing information~\parencite{Henrich2010}. Any two plane curves are related by a sequence of flat Reidemeister moves, as;

\begin{theorem}[Hass and Scott~\parencite{Hass_1994}, de Graaf and Schrijver~\parencite{de_Graaf_1997}]
  \label{thm:trivplanecurve}
  Any plane curve can be brought to the trivial figure-eight twist curve seen in Figure~\ref{fig:figeight} by a sequence of flat Reidemeister moves that never increase the size of the plane curve.
\end{theorem}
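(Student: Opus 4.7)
The plan is to induct on $n = |D|$, the number of vertices. The base case $n=1$ is the figure-eight itself, for which no moves are required. For the inductive step, given $D$ with $n \ge 2$, I want to exhibit a (possibly empty) sequence of size-preserving $\RIII$ moves followed by a single size-reducing $\RIm$ or $\RIIm$ move, after which induction finishes the job.

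The first substantive step is to analyze the face structure of $D$. Since $D$ is a connected 4-valent planar map, Euler's formula gives $V - E + F = 2$ with $V = n$, $E = 2n$, $F = n + 2$, so the sum of face degrees equals $2E = 4n$. If some face has degree $1$ (a monogon), the corresponding self-loop is removed by an $\RIm$ move, dropping to $n-1$ vertices. If some face has degree $2$ (a bigon), then since every face of a planar map is a topological disc, the two bounding arcs can be pulled apart by an $\RIIm$ move, dropping to $n-2$ vertices. Either way the inductive hypothesis applies to the smaller curve. A quick face-degree count ($3(n+2) \le 4n$ forces $n \ge 6$) shows that these easy sub-cases must always occur when $n \le 5$; for larger $n$ they may fail.

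The hard case, and the main obstacle, is when every face of $D$ has degree at least $3$. Here no size-reducing move is immediately available, and since $\RIII$ preserves $n$ one must argue that some sequence of $\RIII$ moves eventually produces a monogon or bigon face. Following Hass-Scott and de Graaf-Schrijver, the approach is to introduce an auxiliary nonnegative-integer complexity---for example, a count of innermost triangles, or the minimal area of an embedded sub-disc whose boundary is built from sub-arcs of the curve---and to show that whenever all faces have degree $\ge 3$ and $n \ge 2$, there is an $\RIII$ move through some carefully chosen triangle that strictly decreases this complexity while keeping $n$ fixed. Finitely many such $\RIII$ moves then force the appearance of a face of degree $\le 2$, and the previous paragraph applies.

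The genuinely delicate point is this last step: identifying the right triangle and the right monovariant. A naive $\RIII$ may simply convert one triangle into another and fail to terminate, and any move that looks geometrically simplifying could temporarily add crossings, which the theorem explicitly forbids. This is where one invokes the structural arguments of Hass-Scott (curve-shortening and minimal-disc arguments on $S^2$) or de Graaf-Schrijver (a combinatorial bigon-extraction lemma) to guarantee monotone progress of the auxiliary complexity with no size increase along the way.
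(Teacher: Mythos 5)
You should first be aware that the paper does not prove this statement at all: it is imported wholesale from Hass--Scott and de Graaf--Schrijver (with the quantitative version due to Chang--Erickson), so there is no ``paper proof'' to match your argument against. Judged as a self-contained proof, your proposal has a genuine gap, and it is exactly the one you flag yourself. The induction scaffolding, the Euler-formula count, and the disposal of curves containing a monogon or bigon are all correct (and the observation that \(n\le 5\) forces such a face is a nice touch). But the entire content of the theorem lives in the remaining case where every face has degree at least \(3\): there you assert that some auxiliary complexity exists which a well-chosen \(\RIII\) move strictly decreases, without constructing the complexity, identifying the triangle, or even proving that a \emph{nondegenerate} triangular face (three distinct vertices, as required for \(\RIII\)) must exist when all faces have degree \(\ge 3\). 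Since you then ``invoke the structural arguments of Hass--Scott or de Graaf--Schrijver'' to supply precisely this step, the proposal reduces the theorem to the very results it is meant to prove; that is a citation, not a proof.

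Two smaller points. First, your bigon case silently assumes \(\RIIm\) is always legal on a bigon face; as the paper's description of \(\RIIm\) notes, the move is only valid when the two exterior faces being merged are distinct, so you should either verify that a degenerate bigon cannot occur in a plane curve (it cannot, but this needs an argument via connectedness of the image of a single immersed circle) or route around such bigons. Second, your induction only needs one size-reducing move per step, which is consistent with the ``never increase the size'' requirement, but note that after reducing you may land back in the hard all-faces-\(\ge 3\) case, so the unproved step is not a one-time obstacle but recurs throughout the induction.
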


\begin{figure}
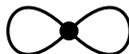

  \centering
  \includestandalone{fig8_twist}
  \caption{The figure-eight twist curve is the ``trivial'' plane curve for the sake of implementation; allowing for a trivial curve of 0 vertices is more difficult and provides no benefit. By Theorem~\ref{thm:trivplanecurve} all other plane curves can be brought to this shape by a sequence of non-crossing-increasing flat Reidemeister moves.}
  \label{fig:figeight}
\end{figure}

This result implies that there exists a sequence of moves between any two plane curves \(D, N\). Furthermore, it implies that at each intermediate state between \(D\) and \(N\) the curve has no more vertices than the larger of \(D\) and \(N\). Chang and Erickson have proven in the arbitrary case that the maximum number of moves required is a small polynomial:
\begin{theorem}[\cite{Chang_2017}]
  The maximum number of non-increasing flat Reidemeister moves required to trivialize a plane curve with \(n\) vertices grows as \(\Theta(n^{3/2})\). Consequently, it takes no more than \(\Theta(n^{3/2})\) moves to transform a curve of \(m \le n\) vertices into a curve of \(n\) vertices.
\end{theorem}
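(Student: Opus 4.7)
The plan is to establish matching upper and lower bounds $O(n^{3/2})$ and $\Omega(n^{3/2})$ on the number of non-increasing flat Reidemeister moves needed to trivialize an $n$-vertex plane curve, then observe that the ``consequently'' clause follows immediately by reversing a trivializing sequence of a curve of size $n$ and prepending it to a trivializing sequence of the curve of size $m\le n$.

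For the upper bound, I would introduce a nonnegative integer potential $\Phi(D)$ with three properties: (i) $\Phi(D)=0$ iff $D$ is the trivial figure-eight, (ii) every RI, RII, or RIII flat move decreases $\Phi$ by at least $1$ (or leaves it unchanged only finitely often, in which case a secondary potential such as vertex count breaks the tie), and (iii) $\Phi(D)=O(n^{3/2})$ for every $n$-vertex plane curve. The natural candidate is a ``depth'' potential: for each face $f$ of $D$, let $d(f)$ be the minimum number of edges that a generic arc from $f$ to the outer face of $D$ must cross, and set $\Phi(D)=\sum_f d(f)$. Equivalently, one may use $\sum_f |w(f)|$ where $w(f)$ is the winding number of any Gauss component around a point in $f$. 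The $O(n^{3/2})$ bound on $\Phi$ is essentially an isoperimetric statement: a 4-valent planar map with $n$ vertices has $n+2$ faces, and the number of faces at depth $\ge k$ decays as one moves outward in a way that forces $\sum_f d(f)=O(n^{3/2})$, matched by concentric ``onion'' curves.

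For the lower bound, I would exhibit an explicit family: take $n\approx m^2$ and construct a spiral (or nested concentric) plane curve in which the innermost face has depth $\Theta(\sqrt{n})$ and a constant fraction of faces sit at depth $\Theta(\sqrt{n})$, so $\Phi=\Theta(n^{3/2})$. Because a single flat Reidemeister move is local—affecting only $O(1)$ faces and changing each of their depths by $O(1)$—no move can decrease $\Phi$ by more than a constant. Combined with $\Phi=0$ at the trivial curve, this forces $\Omega(n^{3/2})$ moves. A careful check that the spiral family is realizable as a plane curve (with the correct Gauss component count to satisfy the $\sigma^2\tau$ condition on two cycles) completes the argument.

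The hard part will be item (ii) above: proving that $\Phi$ is strictly monotone (or weakly monotone with a controllable number of stationary moves) under \emph{arbitrary} non-increasing flat Reidemeister moves. The RI and RII cases are easy because they simply remove a small face and merge its neighbors, so the depth sum drops by the depth of the removed face plus an adjustment of $O(1)$ elsewhere. The RIII case is subtler, because a Reidemeister III move preserves the number of vertices and can in principle leave $\Phi$ unchanged; the remedy is to combine $\Phi$ with a secondary lexicographic potential (e.g.\ the multiset of face depths) and to invoke Theorem~\ref{thm:trivplanecurve} to guarantee that a non-increasing trivializing sequence exists at all, then argue that along any \emph{shortest} such sequence the combined potential strictly decreases. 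Once these monotonicity lemmas are established, both the upper and lower bounds follow directly from the bounds on $\Phi$.
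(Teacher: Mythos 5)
This theorem is imported verbatim from Chang and Erickson \parencite{Chang_2017}; the paper gives no proof of its own, so your sketch has to stand on its own merits, and its central device does not work. The potential \(\Phi(D)=\sum_f d(f)\) (equivalently \(\sum_f |w(f)|\)) fails both properties you need. First, it is not \(O(n^{3/2})\): a spiral curve that winds \(\Theta(n)\) times has an innermost face of depth \(\Theta(n)\) and only \(O(1)\) faces at each depth \(1,2,\dots,\Theta(n)\), so \(\Phi=\Theta(n^2)\). There is no isoperimetric decay of the number of deep faces, since a face at any given depth can be a single bigon. Second, a move is \emph{not} an \(O(1)\) perturbation of \(\Phi\): an \(\RIIm\) move on a bigon at depth \(d\) deletes that face and merges its two neighbours, removing roughly \(2d\) from the sum in one step, so \(\Delta\Phi\) can be \(\Theta(n)\). (These two failures are consistent with each other: the spiral has \(\Phi=\Theta(n^2)\) yet is trivialized by \(O(n)\) moves.) Hence your lower bound (``no move decreases \(\Phi\) by more than a constant'') and your upper bound (``\(\Phi=O(n^{3/2})\) and every move decreases it'') both collapse; the RIII monotonicity issue you flag as ``the hard part'' is not the real obstruction.

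The argument of Chang and Erickson is different on both sides. For the lower bound they use the \emph{defect}, a signed sum over interleaved pairs of crossings in the Gauss code --- essentially the Arnol'd-type quantity \(-\tfrac12(2St+J^+)\) that this paper itself computes in Section~\ref{sec:v2ivt}. That invariant genuinely changes by at most \(2\) under any flat Reidemeister move, vanishes on the trivial curve, and equals \(\Theta(p^3)=\Theta(n^{3/2})\) on the flat torus knot \(T(p,p+1)\) with \(n=\Theta(p^2)\) crossings, which yields the \(\Omega(n^{3/2})\) bound. The upper bound is not a potential argument at all but an algorithm: one repeatedly extracts a simple subloop and contracts it through the remaining tangle at an amortized cost of \(O(\sqrt n)\) moves per deleted vertex. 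Your handling of the ``consequently'' clause (concatenate a forward trivialization of the \(m\)-vertex curve with a reversed trivialization of an \(n\)-vertex curve) is correct, but without a usable invariant the two main bounds remain unproved.
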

A similar pair of results, if flat Reidemeister I moves are forbidden, was proven by Nowik~\parencite{Nowik2009}. In this case, the path of curves between a curve with \(n\) vertices and its trivialization takes no more than \(\Theta(n^2)\) moves and involves no intermediate curves with more than \(n+2\) vertices.

These flat Reidemeister moves form the transitions for our Markov chain on \(\KnotShad\). A key property is that they are all reversible. We define each transition in detail, noting their unique inverses, since such details are necessary for efficient implementation. Let \(D\) be a rooted plane curve with root flag \(a\).

\begin{figure}[hbtp!]
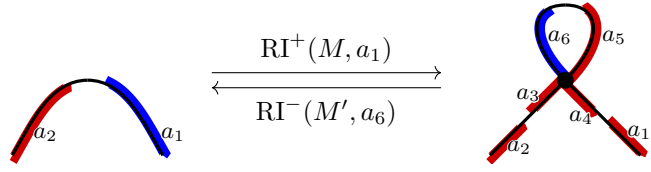

  \centering
  \includestandalone{flatr1}
  \caption{The flat Reidemeister I moves}
  \label{fig:flatr1}
\end{figure}


\subsubsection{Shadow Reidemeister I loop addition, \(\RIp\)} See Figure~\ref{fig:flatr1} (left) and consider the edge to be oriented from right to left. This defines two flags, \(a_1\) and \(a_2\), and we take \(a_1\) to be the root flag. The addition of a loop on \( a_1\) is always possible through the move \(\RIp\).

Let \((a_1a_2) = e(a_1)\), and let \(a_3,a_4,a_5,a_6\) be four new flags. Then \(\RIp(D,a_1)\) is the rooted map produced by deleting edge \(e(a_1)\) from \(D\), then adding the vertex \((a_3a_4a_5a_6)\) and edges \((a_1a_4)\), \((a_5a_6)\), and \((a_3a_2)\). The new root is the flag \(a_6\). One can verify that this process is invertible, in particular that:
\[D = \RIm(\RIp(D,a_1),a_6).\]

\subsubsection{Shadow Reidemeister I loop deletion, \(\RIm\)} See Figure~\ref{fig:flatr1} (right). Loop deletion
\(\RIm\) is possible whenever, given the root flag \(a = a_6\), the face \(f(a_6)\) is a singleton (\textit{i.e.} the corresponding face is a monogon).

If this is true, then \(\RIm(D,a_6)\) is the rooted map produced from deleting the vertex \(v(a_6) = (a_3a_4a_5a_6)\) and the edges \(e(a_6) = (a_6a_5)\), \(e(a_3) = (a_3a_2)\), \(e(a_4) = (a_4a_1)\) from \(D\), and adding the edge \((a_1a_2)\). The flags \(a_3, a_4, a_5, a_6\) are discarded. The new root is the flag \(a_1\). This process is invertible: \[D = \RIp(\RIm(D,a_6),a_1).\]

\begin{figure}[hbtp!]
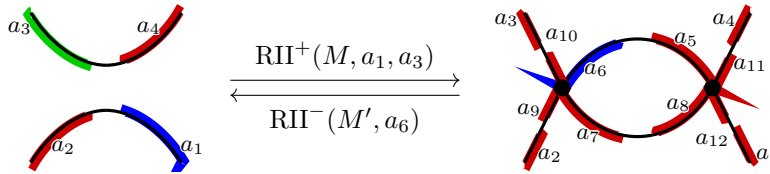

  \centering
  \includestandalone{flatr2}
  \caption{The flat Reidemeister II moves}
  \label{fig:flatr2}
\end{figure}

\subsubsection{Shadow Reidemeister II bigon addition, \(\RIIp\)} See Figure~\ref{fig:flatr2} (left). Bigon addition
\(\RIIp\) requires a second flag \(b \in f(a) \)  which is different from the root flag \(a\). Let \(a = a_1\) and \(b = a_3\).

The rooted map \(\RIIp(D,a_1,a_3)\) is constructed from \(D\) as follows. Delete
the edges \(e(a_1) = (a_1a_2)\) and \(e(a_3) = (a_3a_4)\) from \(D\). Add eight
new flags \(a_5,a_6,a_7,a_8,a_9,a_{10},a_{11},a_{12}\). Add the vertices
\((a_6a_{10}a_9a_7)\) and \((a_5a_8a_{12}a_{11})\). Insert the edges
\((a_2a_9)\), \((a_3a_{10})\), \((a_7a_8)\), \((a_5a_6)\), \((a_1a_{12})\), and
\((a_4a_{11})\). The new root is the flag \(a_6\). This process is invertible:
\[D = \RIIm(\RIIp(D,a_1,a_3), a_6). \]

\subsubsection{Shadow Reidemeister II bigon deletion, \(\RIIm\)} See Figure~\ref{fig:flatr2} (right). Bigon deletion \(\RIIm\) is possible provided the root flag \(a = a_6\) is on a face \(f(a_6)\) which consists of precisely two flags (\textit{i.e.} a bigon).

Additionally, it is required that the two exterior faces which are merged by the
transition be \emph{distinct}; this is required to preserve connectedness and
genus. Indeed, Suppose that \(D\) is a curve embedded on an orientable surface
of genus \(g\), so that \(v(D) - e(D) + f(D) = 2(g-1)\), but that the faces to
be merged are not distinct. The number of vertices and edges decrease by 2 and 4
respectively by a \(\RIIm\) operation, as usual, but the number of faces now
\emph{remains fixed}. This implies that the produced curve lives in either a
surface of one fewer genus, or if the original map was embedded on the sphere,
\emph{two disjoint spheres}.

The rooted map \(\RIIm(D,a_6)\) is constructed from \(D\) as follows. Delete the
vertex \(v(a_6) = (a_6a_{10}a_9a_7)\) and the edges \(e(a_9) = (a_2a_9)\),
\(e(a_{10}) = (a_3a_{10})\), \(e(a_7) = (a_7a_8)\) and \(e(a_6) = (a_6a_5)\).
Delete the vertex \(v(a_5) = (a_5a_8a_{12}a_{11})\) and the edges \(e(a_{12}) =
(a_1a_{12})\) and \(e(a_{11}) = (a_4a_{11})\). Add the edges \((a_1a_2)\) and
\((a_3a_4)\). The flags \(a_5,a_6,a_7,a_8,a_9,a_{10},a_{11},a_{12}\) are
discarded. The new root is the flag \(a_1\). This process is invertible:
\[D = \RIIp(\RIIm(D,a_6),a_1,a_3). \]

\begin{figure}[hbtp!]
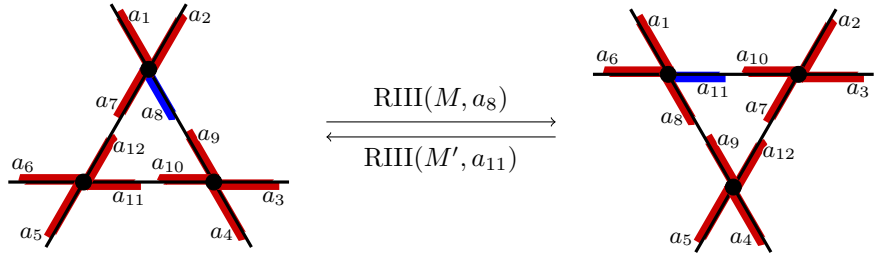

  \centering
  \includestandalone{flatr3}
  \caption{The flat Reidemeister III move}
  \label{fig:flatr3}
\end{figure}

\subsubsection{Shadow Reidemeister III triangle flip, \(\RIII\)} See Figure~\ref{fig:flatr3}. Triangle
flipping \(\RIII\) is possible provided the root flag \(a = a_8\) lies on a
face \(f(a_8)\) with precisely three flags, all of whom are contained
in \emph{different} vertices (\textit{i.e.} a nondegenerate triangle).

The rooted map \(\RIII(D,a_8)\) is constructed from \(D\) as follows. Say that
\(v(a_8) = (a_1a_2a_7a_8)\), \(e(a_8) = (a_8a_9)\), \(e(a_7) = (a_6a_7)\),
\(v(a_6) = (a_5a_{11}a_{12}a_6)\), and \(v(a_9) = (a_3a_9a_{10}a_4)\). Delete
vertices \(v(a_7)\), \(v(a_6)\), \(v(a_9)\) and edges \(e(a_7)\), \(e(a_8)\),
and \(e(a_{10})\). Insert vertices \((a_6a_8a_{11}a_1)\), \((a_2a_{10}a_7a_3)\),
\((a_4a_{12}a_9a_5)\) and edges \((a_6a_9)\), \((a_7a_{12})\), and
\((a_{10}a_{11})\). The new root is the flag \(a_{11}\). This process is invertible:
\[ D = \RIII(\RIII(D,a_8),a_{11}).\]

\section{Markov Chain}
\label{sec:markovchain}
The five flat Reidemeister moves described in the previous section allow us to define a Markov chain on the space plane curves. That the chain can move between any two given plane curves follows immediately from Theorem~\ref{thm:trivplanecurve}.

\subsection{A Boltzmann Markov chain on plane curves}
\label{sec:boltzmann}

Our Markov chain sampler for plane curves will have a stationary \emph{Boltzmann distribution}; one which samples curves of different sizes with different probabilities but is \emph{uniform on curves of fixed size}. In other words, for \(0 \le z < \mu^{-1}\) an arbitrary parameter, it has stationary distribution such that a curve \(D\) has probability
\begin{equation}
\Prb(D) \propto z^{|D|}
\end{equation}
The parameter \(z\) then is then maybe used to control mean size of sampled curves, and smaller values of \(z\) will prevent the samples from growing infinitely large.

Let \(p_1, p_2, p_3 > 0\). These numbers correspond to the probabilities of performing, respectively, a Reidemeister I, II, or III move. Consequently, we must have \(p_i > 0\). Further, we must also have \(1-(p_1+p_2+p_3) > 0\), as this quantity will correspond to the probability of selecting a new root flag. Let \(D_i\) be the input rooted plane curve with \(n\) vertices and root flag \(a\) and perform one of the following six subprocedures with different probabilities.  If a move fails then set \(D_{i+1} := D_i\).

\begin{itemize}
\item[ {\( [0] \)}] Re-rooting, with probability \(1-(p_1+p_2+p_3)\). Given the rooted diagram \(D_i\), forget the root and select a new root \(b\) for \(D_{i+1}\) from the \((\Aut{D_{i}})/(4|D|)\) choices. The probability that this transition succeeds is \(1-(p_1+p_2+p_3)\). Note that \(\Aut{D_{i}}\) needn't be calculated for this operation, as it is equivalent to choose one flag from the old rooted curve.
  
\item[{\([1^+]\)}] Loop addition, with probability \(p_1/2\). Sample \(0 \le \alpha < 1\) and fail immediately if \(\alpha > z\). Return \(D_{i+1} := \RIp(D_i,a)\). The probability that this transition succeeds on an \(n\)-crossing plane curve \(D\) is \(zp_1/2\).
  
\item[{\([1^-]\)}] Loop deletion, with probability \(p_1/2\). Provided \(f(a)\) is a loop, return \(D_{i+1} := \RIm(D_i,a)\). The probability that this transition succeeds if root flag \(a\) has \(f(a)\) a loop in \(D\) is \(p_1/2\).
  
\item[{\([2^+]\)}] Bigon addition, with probability \(p_2/2\). Sample \(0 \le \alpha < 1\) and fail immediately if \(\alpha > z^2\). The flag \(a\) lies along a face of \(d\) edges; provided \(d \ne 1\) (otherwise fail), uniformly sample the integer \(k\) between \(1\) and \(d-1\). The flag \(a' = (\sigma\tau)^k(a)\) is a distinct flag along the same face as \(a\). Then return \(D_{i+1}:=\RIIp(D_i,a,a')\). The probability that this transition succeeds on any given additional flag \(a'\) along the root \(d\)-face is
  \[ \frac {z^2p_2}{2(d-1)}. \]
  
\item[{\([2^-]\)}] Bigon deletion, with probability \(p_2/2\). Fail if the flag \(a\) does not lie along a bigon. The size \(d\) of the face which would be produced by bigon deletion is the sum \(|f(\tau\sigma(a))|+|f(\sigma^3\tau(a))| - 2 = d\). Sample \(0 \le \beta < 1\) uniformly and fail if \(\beta > (d-1)^{-1}\). Otherwise, return \(D_{i+1} := \RIIm(D_i,a)\). The probability that this transition succeeds on a root \(a\) along a bigon is
  \[ \frac {p_2}{2(d-1)}. \]
  
\item[{\([3]\)}] Triangle flipping, with probability \(p_3\). Fail if the flag \(a\) does not lie along a nondegenerate triangle. Otherwise, return \(D_{i+1} := \RIII(D_i,a)\). The probability that this transition succeeds assuming the root lies along a nondegenerate triangle is \(p_3\).
  
\end{itemize}

We will prove that in the limiting distribution the probability that any given \(n\)-crossing rooted plane curve \(D\) is chosen is
\begin{equation}
\Prb(D) = \frac{z^n}{K(z)},
\end{equation}
where \(K(z)\) is the value of the generating function \(K(t)\) at \(z\) so if \(K(z)\) converges to a number, \(\Prb(D) \propto z^n\) (this happens provided \(z < \mu^{-1}\)). It follows that, if we ignore the roots of the sampled diagrams in order to sample \emph{unrooted} diagrams, the probability of an unrooted diagram \(\overline D\) being sampled is
\begin{equation}
 \Prb(\overline D) = \frac{4n}{\Aut{\overline D}} z^n.
\end{equation}
We note that as the probability that the automorphism group of a plane curve is trivial tends exponentially quickly to 1~\parencite{Chapman2016}, the typical probability of an unrooted diagram \(\overline D\) will be \(\Prb(\overline D) = 4nz^n\).

This Markov chain is ergodic as it satisfies the following three properties;
\begin{enumerate}
\item It is connected: It is possible to get from any one plane curve to another in a finite number of transitions. Provided \(p_1, p_2, p_3 > 0\) it is possible to get between any two pairs of unrooted curves (by Theorem~\ref{thm:trivplanecurve}); provided \(p_1 + p_2 + p_3 < 1\) it is guaranteed that any flag may be chosen as the root.
\item It is aperiodic: Since at each step there is a non-zero probability that the transition failes, there is no periodicity in the Markov chain.
\item The chain satisfies detailed balance: For any two curves \(D\) and \(N\), the transition probabilities \(\Trprb\) and curve probabilities \(\Prb\) satisfy,
  \[ \Trprb(D \to N)\Prb(D) = \Trprb(N \to D)\Prb(N). \]
  This last point requires the most care and we discuss it below, with more details in Appendix~\ref{sec:detailedbalanceproofs}.
\end{enumerate}
The fundamental theorem of Markov chains then yeilds the following result.

\begin{reptheorem}{thm:ergodic}[Slightly restated]
This Markov chain satisfies detailed balance. Furthermore, if \(p_1, p_2, p_3 \ne 0\), \((p_1+p_2+p_3) \ne 1\), and \(z < \mu^{-1}\) the chain is ergodic with stationary distribution,
  \[ \Prb(D) = \frac{z^{|D|}}{\sum_{\ell}k_\ell z^\ell} \propto z^{|D|}. \]
\end{reptheorem}

The proof of this result is primarily routine. We have omitted some details which can be found in Appendix~\ref{sec:detailedbalanceproofs}. 

\begin{proof}
  We will begin by assuming that
  \begin{align}
  \Prb(D) = \frac{z^{|D|}}{\sum_{\ell}k_\ell z^\ell} \propto z^{|D|}, 
  \end{align}
  and proving that detailed balance holds with this hypothesis. Notice that \(z\) has been chosen sufficiently small so that the denominator converges~\parencite{Chapman2016}. In all cases the denominator is the same and a common factor in the calculations that follow, so we omit it.
  
  Let \(D\) be a rooted plane curve of \(n\) vertices and \(a\) be the root flag in \(D\). Observe first that the three pairs of reversing transitions \((\RIp, \RIm)\), \((\RIIp, \RIIm)\), and \((\RIII, \RIII)\) all change the number of vertices by distinct complementary amounts; hence any two diagrams can be related by at most one pair of these transitions. Notice that if the vertex counts agree, then a pair of diagrams may be related by re-rooting. The main concern now is to prove the detailed balance equations for all possible transitions. Full details are in Appendix~\ref{sec:detailedbalanceproofs}; we demonstrate that detailed balance holds in the case of the \(\RII\) moves, as an example.

   Suppose that \(N = \RIIp(D,a,a')\) with root flag \(b\). This means that \(N\) is unique in that \(D = \RIIm(N,b)\). The flags \(a, a'\) lie along a face in \(D\) of degree \(d\). Then
  \begin{align}
    \Trprb(D \to N)\Prb(D) &= \Trprb(N \to D)\Prb(N) \\
    \frac{z^2p_2}{2(d-1)}z^n &= \frac{p_2}{2(d-1)}z^{n+2}.
  \end{align}
  In all other cases, the transition probabilities are symmetrically zero. Hence we conclude that detailed balance holds with the hypothesized probability distribution.

  Provided \(p_1, p_2, p_3 \ne 0\) and \((p_1+p_2+p_3) \ne 1\), the Markov chain can reach all rooted plane curves as all flat Reidemeister moves and the rerooting move have nonzero transition probability. Hence in this case, the Markov chain is ergodic.
\end{proof}

Notice that, by Theorem~\ref{thm:trivplanecurve}, any plane curve can be reduced to the trivial plane curve by a sequence of flat Reidemeister moves which \emph{never} increases the number of vertices. 
This property implies that if we restrict our Markov chain to the set of plane curves of size at most \(L\), then the chain remains ergodic.  This is definitely not true for \emph{knot diagrams} \parencite{Kauffman2006Hard}. There exist diagrams of knots which are locally minimal in the sense that they can be reduced in crossing number, but only through sequences of moves which increase the crossing number at some point. 

\begin{corollary}
  Imposing the further restriction on the \emph{loop addition} and \emph{bigon addition} transitions that we fail if the input plane curve \(D\) would transition to have more than \(L\) vertices (\emph{i.e.}\ if \(D\) has \(L\) or \(L-1\) vertices, respectively) yields a Markov chain which explores all of \(\KnotShad_{i\le L}\) and is ergodic. Furthermore, the probability of sampling any plane curve \(D\) with \(m \le L\) crossings is,
  \[ \Prb(D) = \frac{z^m}{\sum_{\ell=1}^{L}{k_{\ell}z^\ell}} \propto z^m. \]
  \label{cor:capncross}
\end{corollary}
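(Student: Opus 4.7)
The plan is to adapt the proof of Theorem~\ref{thm:ergodic} essentially verbatim, checking that the extra size cap never breaks connectivity, aperiodicity, or detailed balance. The cleanest way is to phrase the restricted chain as the original chain with a few transition probabilities forcibly set to zero whenever the target would lie outside $\KnotShad_{i\le L}$, and then verify each of the three ergodicity ingredients survives.

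First I would argue connectivity. By Theorem~\ref{thm:trivplanecurve}, any plane curve $D$ of size $m\le L$ can be reduced to the trivial figure-eight curve through a sequence of flat Reidemeister moves whose intermediate curves never exceed size $m\le L$. Concatenating such a reduction for $D$ with the reverse of a reduction for any other $N$ of size $\le L$ yields a path between $D$ and $N$ in which every intermediate curve has size at most $\max(|D|,|N|)\le L$, so no transition on this path is killed by the cap. Combined with the re-rooting move (which has probability $1-(p_1+p_2+p_3)>0$ and is unaffected by the cap) this gives an irreducible chain on the finite state space $\KnotShad_{i\le L}$. Aperiodicity is immediate from the fact that there is still a non-zero probability of failure at every step, exactly as in the proof of Theorem~\ref{thm:ergodic}.

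The main thing to check is detailed balance. Fix $D,N\in\KnotShad_{i\le L}$. The transitions pair up exactly as before: either $N = \RIpm(D,\cdot)$ (so $||D|-|N||=1$), $N = \RIIpm(D,\cdot)$ (so $||D|-|N||=2$), $N = \RIII(D,\cdot)$, $N$ and $D$ differ only by re-rooting, or $\Trprb(D\to N)=\Trprb(N\to D)=0$. The key observation is that the size cap can only switch a transition probability from positive to zero when the transition leaves $\KnotShad_{i\le L}$; but since both $D$ and $N$ lie in $\KnotShad_{i\le L}$, neither direction is truncated by the cap. Consequently all transition probabilities appearing in
\[
\Trprb(D\to N)\Prb(D) = \Trprb(N\to D)\Prb(N)
\]
are identical to those computed in the proof of Theorem~\ref{thm:ergodic}, so the balance equations with $\Prb(D)\propto z^{|D|}$ are satisfied unchanged.

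Finally, because $\KnotShad_{i\le L}$ is finite, the (unique) stationary distribution of this irreducible aperiodic chain is obtained by normalising $z^{|D|}$ over the restricted state space, yielding
\[
\Prb(D) = \frac{z^{|D|}}{\sum_{\ell=1}^{L} k_\ell\, z^\ell},
\]
which is what we wanted. I expect the only delicate step to be a careful accounting of the boundary cases $|D|\in\{L-1,L\}$, where additions fail; the point to emphasise is that this asymmetry in the absolute transition probabilities never shows up in a detailed-balance pair with both endpoints in $\KnotShad_{i\le L}$, so it does not disturb the stationary measure. As a byproduct, note that the finiteness of the state space removes the need for the constraint $z<\mu^{-1}$: any $z\ge 0$ gives a well-defined probability distribution.
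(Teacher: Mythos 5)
Your argument is correct and follows the paper's own route: the paper derives this corollary directly from Theorem~\ref{thm:trivplanecurve} (non-increasing reduction paths through the figure-eight curve give connectivity within \(\KnotShad_{i\le L}\)) together with the detailed-balance computations from the proof of Theorem~\ref{thm:ergodic}, which survive unchanged for exactly the reason you give --- every balance pair with both endpoints inside the restricted space has its transition probabilities untouched by the cap, and the truncated normalising sum \(\sum_{\ell=1}^{L}k_\ell z^\ell\) is automatically finite. One caution on your closing aside: relaxing to ``any \(z\ge 0\)'' is too generous, since for \(z>1\) the acceptance tests \(\alpha>z\) and \(\alpha>z^2\) never fail, so the success probabilities saturate at \(p_1/2\) and \(p_2/(2(d-1))\) rather than \(zp_1/2\) and \(z^2p_2/(2(d-1))\) and the balance equations break; the finite state space buys you any \(0<z\le 1\), not arbitrary \(z\).
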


\begin{proof}
  This follows from Theorem~\ref{thm:trivplanecurve} and the proof of the previous theorem.
\end{proof}

At this stage we are free to choose \(p_1, p_2, p_3\). It is not obvious, how they should be chosen to produce the most efficient sampling method. In practice, we simplified the problem by choosing $p_1+p_2+p_3=1$ and then applying a rerooting move after each step. The ergodicity of this chain follows by very similar arguments.
\begin{corollary}
  For \(p = (p_1, p_2, p_3)\) with \(p_1 + p_2 + p_3 = 1\), consider the Markov chain which differs from the prior by, before and after each transition step, uniformly randomly re-rooting the state curve. Setting \(p_1 + p_2 + p_3=1\) eliminates the re-rooting transition. Then this Markov chain is ergodic.
  \label{cor:noreroot}
\end{corollary}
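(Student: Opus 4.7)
The plan is to view the modified chain as a composition $RMR$ of three kernels: a uniform re-rooting $R$, a single attempted flat Reidemeister move $M$ drawn with probabilities $(p_1,p_2,p_3)$, and a second uniform re-rooting $R$. I will then check separately that $RMR$ preserves the Boltzmann distribution $\Prb(D)\propto z^{|D|}$, is irreducible, and is aperiodic; ergodicity then follows from the fundamental theorem of Markov chains.

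First I would verify invariance of $\Prb$ under each factor. The re-rooting kernel $R$ moves uniformly among the $4n$ flags of the current size-$n$ curve, so $\Prb(D)R(D,D')=\Prb(D')R(D',D)=z^n/(4nK(z))$ whenever $D,D'$ share an unrooted shadow (and both sides are zero otherwise), establishing detailed balance against $\Prb$. The kernel $M$ is literally the chain of Section~\ref{sec:boltzmann} with the re-rooting branch excised, and the detailed-balance calculation in the proof of Theorem~\ref{thm:ergodic} (spelled out in Appendix~\ref{sec:detailedbalanceproofs}) treats each Reidemeister pair independently of the re-rooting branch, so $M$ still satisfies detailed balance against $\Prb$. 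Hence the composition $RMR$ preserves $\Prb$.

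For irreducibility, I would fix any two rooted plane curves $D,N$ and invoke Theorem~\ref{thm:trivplanecurve} to obtain a finite sequence of flat Reidemeister moves connecting their unrooted shadows. In each step of $RMR$, the leading $R$ places the root, with probability $1/(4|D|)$, at whichever flag the next move requires; $M$ executes that move with a nonzero probability (one of $p_1,p_2,p_3$ times an elementary acceptance factor); the trailing $R$ chooses the next root. Concatenating these elementary steps exhibits a positive-probability path from $D$ to $N$. Aperiodicity is immediate because each subprocedure in $M$ may fail with positive probability---either through rejection against $z$ or $z^2$ in $\RIp$ and $\RIIp$, or via the local geometric requirements of $\RIm$, $\RIIm$, $\RIII$---so $(RMR)(D,D)>0$ for every $D$.

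The main obstacle is bookkeeping rather than anything substantive: one must be careful that the root flag appearing inside each application of $M$ is the flag just produced by the preceding $R$ (not a root inherited from the previous iteration), and that the state compared for detailed balance is the one emitted by the trailing $R$. Once this is respected, no calculation beyond the proof of Theorem~\ref{thm:ergodic} is required, and the invariance of $\Prb$ together with irreducibility and aperiodicity yields ergodicity.
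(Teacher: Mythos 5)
Your proposal is correct and follows essentially the same route as the paper: the appendix proof likewise factors the modified chain as re-root, move, re-root and exploits the reversibility of each factor (the paper pushes this one step further, using the palindromic structure $RMR$ to conclude detailed balance of the composite, whereas you settle for invariance of $\Prb$, which is equally sufficient for ergodicity once irreducibility and aperiodicity are in hand). The only slight imprecision is that $R(D,D')$ should carry a factor $\Aut{\overline D}/(4n)$ rather than $1/(4n)$ when the underlying curve has symmetries, but since this factor is the same on both sides of the balance equation it does not affect the argument.
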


The proof is quite standard, we give it in Appendix~\ref{sec:detailedbalanceproofs}. Notice that in practice, there is no need between consecutive transitions to re-root more than once. In our experiments, we choose the five flat Reidemeister moves by taking \(p_1 = 2/5\), \(p_2 = 2/5\) and  \(p_3 = 1/5\).

\subsection{Flat histogram sampling by Wang-Landau state density estimation}
\label{sec:wanglandau}

Our simulations in Section~\ref{sec:bzparams} show that the output curve sizes have high variance and change rather drastically with the parameter \(z\). There are several different approaches that one might use to overcome this --- such as multiple Markov chain Monte Carlo \parencite{geyer1991, orlandini1998mmcmc}. We have, instead, chosen the flat histogram method invented by Wang and Landau~\parencite{Wang01}. The transition probabilities are chosen so that the number of objects sampled at size \(n\) is approximately equal to the number of objects sampled at size \(m\). This requires that 
\begin{align}
  \frac{\Prb(D)}{\Prb(N)} & \approx \frac{k_{|N|}}{k_{|D|}}
\end{align}
Wang and Wang and Landau's algorithm~\parencite{Wang01} achieves this by estimating the density of states --- \textit{i.e.} by estimating \(k_n\). Consequently we must fix a maximum size $L$ and set the probabilities of transitions to diagrams with more than $L$ vertices to zero. The algorithm begins with a tuning phase which runs through the Markov chain adjusting transition properties until the above condition approximately holds. These final transition properties are then used for a MCMC sampler.

There are three advantages to this strategy over Boltzmann MCMC sampling:
\begin{enumerate}
\item Mixing of the Markov chain is more efficient: Rather than get caught up in one small range of sizes (in part due to the \emph{local minimum} bottleneck; see Section~\ref{sec:bzparams}), with Wang-Landau tuning the Markov chain moves frequently between all sizes. This improves the independence of two samples of the same size.

\item Guarantee of sampling objects of given size: The Markov chain produces objects of sizes up to $L$ with nonzero probability, so given enough tuning and enough random samples, a sufficient number of objects of a desired size will be sampled.

\item Approximate enumeration: Transition probabilities found during the tuning step are directly related to the counts of objects of each size. We use this data to estimate the number $k_n$ in Section~\ref{sec:wanglandaudata}.
\end{enumerate}

Rather than depending on a single parameter \(z\), sampling via a Wang-Landau implementation requires a data structure \((\ell,L,G)\), where:
\begin{itemize}
\item \(\ell\) is the minimum size of plane curves. To ensure ergodicity, we always have \(\ell = 1\), although further results on plane curves may allow \(\ell\) to vary (possibly in terms of \(L\)) while still guaranteeing ergodicity.
\item \(L\) is the maximum size of plane curves.
\item \(G = (G_i)_{i=\ell}^{L}\) is a vector approximate enumeration data in the following sense: If \(g_n = e^{G_n}\), then \(g_n/g_{n-1} \approx k_{n}/k_{n-1}\). Since we do not know \(k_n\) for \(n\geq28\)), this data is gathered via a tuning phase (described in Section~\ref{sec:wanglandau tuning} below).
\end{itemize}

Given the approximate enumeration data \(G_n = \log(g_n) \), define probabilities
\begin{align}
\tp(n, m) &= \min\left\{1, \exp(G_n - G_m)\right\} = \min\left\{1, g_n/g_m\right\} \approx k_n/k_m,
\end{align}
unless \(m < \ell\) or \(m > L\) in which case \(\tp(n,m) = 0\). Wang-Landau flattened MCMC sampling then works as follows. The implementation which follows is largely the same as the Boltzmann implementation, with each transition instead required to pass a check of probability \(\min\left\{1, g_n/g_{n+k}\right\}\), where \(n\) is the size of the input curve and \(k\) is the change in size of the transition operation. Let \(p_1, p_2, p_3 > 0\) and \((p_1+p_2+p_3) < 1\) and let \(D_i\) be the input rooted plane curve with \(n_i\) vertices and root flag \(a\) and perform one of the following six moves with different probabilities. If a move fails then set \(D_{i+1} := D_i\).

\begin{itemize}
\item[{\([0]\)}] Re-rooting, with probability \(1-(p_1+p_2+p_3)\). Given the rooted diagram \(D_i\), forget the root and select a new root \(b\) for \(D_{i+1}\) from the \((\Aut{D_{i}})/(4|D|)\) choices. The probability that this transition is chosen and succeeds is \(1-(p_1+p_2+p_3)\).

\item[{\([1^+]\)}] Loop addition, with probability \(p_1/2\). Sample \(0 \le \alpha < 1\) and fail immediately if \(\alpha > \tp(n_i, n_ {i+1})\). Return \(D_{i+1} := \RIp(D_i,a)\). The probability that this transition is chosen and succeeds on an \(n\)-crossing rooted plane curve is
  \begin{equation*}
    \tp(n, n+1) \frac{p_1}{2} = \min\left\{1,\frac{g_n}{g_{n+1}}\right\}\frac{p_1}{2}
  \end{equation*}
  
\item[{\([1^-]\)}] Loop deletion, with probability \(p_1/2\). Sample \(0 \le \alpha < 1\) and fail immediately if \(\alpha > \tp(n_i, n_{i-1})\). Provided \(f(a)\) is a loop, return \(D_{i+1} := \RIm(D_i,a)\). The probability that this transition is chosen and succeeds is \(f(a)\) is a loop is 
\begin{equation*}\tp(n, n-1) \frac{p_1}{2} = \min\left\{1,\frac{g_n}{g_{n-1}}\right\} \frac{p_1}{2} .\end{equation*}
  
\item[{\([2^+]\)}] Bigon addition, with probability \(p_2/2\). Sample \(0 \le \alpha < 1\) and fail immediately if \(\alpha > \tp(n_i, n_{i+2})\). The flag \(a\) lies along a face of \(d\) edges; provided \(d \ne 1\) (otherwise fail), uniformly sample the integer \(k\) between \(1\) and \(d-1\). The flag \(a' = (\sigma\tau)^k(a)\) is a distinct flag along the same face as \(a\). Then return \(D_{i+1}:=\RIIp(D_i,a,a')\). The probability that this transition is chosen and succeeds on an \(n\)-crossing plane curve with an given cofacial flag \(a'\) is
  \begin{equation*} \tp(n, n+2)\frac{p_2}{2(d-1)} = \min\left\{1,\frac{g_n}{g_{n+2}}\right\}\frac{p_2}{2(d-1)} \end{equation*}
  
\item[{\([2^-]\)}] Bigon deletion, with probability \(p_2/2\). Sample \(0 \le \alpha < 1\) and fail immediately if \(\alpha > \tp(n_i, n_{i-2})\). Fail if the flag \(a\) does not lie along a bigon. The size \(d\) of the face which would be produced by bigon deletion is the sum \(|f(\tau\sigma(a))|+|f(\sigma^3\tau(a))| - 2 = d\). Sample \(0 \le \beta < 1\) uniformly and fail if \(\beta > (d-1)^{-1}\). Otherwise, return \(D_{i+1} := \RIIm(D_i,a)\). The probability that this transition is chosen and succeeds if \(f(a)\) is a bigon and \(d > 1\) is,
  \begin{equation*} \tp(n, n-2) \frac{p_2}{2} = \min\left\{1,\frac{g_n}{g_{n-2}}\right\}\frac{p_2}{2(d-1)} . \end{equation*}
  
\item[{\([3]\)}] Triangle flipping, with probability \(p_3\). Fail if the flag \(a\) does not lie along a nondegenerate triangle. Otherwise, return \(D_{i+1} := \RIII(D_i,a)\). The probability that this transition is chosen and succeeds if \(a\) lies along a nondegenerate triangle is \(p_3\).

\end{itemize}

\subsection{Wang Landau tuning}
\label{sec:wanglandau tuning}
Before sampling, we have to gather data for \(g_n\) (the approximate enumeration) via a tuning algorithm with parameters \(\ell\), the smallest size diagram to allow in the sample space (always in this article \(\ell = 1\), as otherwise it is not necessarily clear if the Markov chain is ergodic), \(L\), the largest size diagram in the sample space, \(\epsilon\), which describes the desired flatness of the sampling histogram, and \(\Delta\), a threshold for flatness of a histogram of occurrences.

A starting point \(D_0\) in the sample space of diagrams is chosen; our algorithm starts with the figure-eight diagram in Figure~\ref{fig:figeight}. The values \(g_n\) are initialized to \(0\). Finally, a scaling factor \(f\) is initialized; we start it at \(f = 1\).

The algorithm then proceeds as follows.
\begin{enumerate}
\item If \(f < \epsilon\), terminate.
\item A histogram \(H = (H_n)_{n=\ell}^{L}\) of bins \(\ell\) to \(L\)
  inclusive is initialized empty. This histogram will track the occurrences of
  diagrams of size \(n\) at each step of the Markov chain.
\item Step, via the Wang-Landau weighted algorithm described above, producing
  \(D_{i+1}\) from the current \(D_i\). 
\item Every $S_0$ steps, increment \(H_{|D_{i+1}|}\) by 1, and increment \(g_{|D_{i+1}|}\) by \(f\).
\item Every $S_1$ steps, check if the histogram is \(\Delta\)-flat, \textit{i.e.}, check if
  \begin{equation} \frac{\min{H}}{1-\Delta} > \frac{\sum_{i=\ell}^{L}{H_{i}}}{L - \ell} >
    \frac{\max{H}}{1+\Delta}. 
   \end{equation} If so, let \(f := f/2\) and proceed with step (1).
  Otherwise repeat step (3).
\end{enumerate}
The numbers \(S_0,S_1\) are somewhat arbitrary, however \(S_1\) should be chosen to allow sufficient time for the chain to diffuse over all sizes. We note that, especially for large spreads of \(\ell\) and \(L\), the tuning phase may be time-consuming.  As the algorithm is tuning and \(g\) is being updated, the chain is \emph{not} ergodic as it does not satisfy the detailed balance condition. However, the tuning phase approaches a limiting state where \(g_{m+1}/g_m \approx k_{m+1}/k_m\). At that point, one may cease updating \(g\) and use those now fixed probabilities. This chain will satisfy detailed balance and so be ergodic. This tuned \(g\) data may then be reused for a number of further simulations without recalculation.

\begin{reptheorem}{thm:wlergodic}
   Let $N \in \N$ and $D$ be a plane curve with \(1 \leq n \leq L\) vertices and let $g_n$ be fixed. The Markov chain described in Section~\ref{sec:wanglandau} has stationary distribution given by 
  \[ \Prb(D) \propto \frac{1}{g_{n}} \]
  Since $g_n \approx k_n$, plane curves are sampled uniformly within a given size, and approximately uniformly across sizes.
\end{reptheorem}

\begin{proof}
  We will begin by assuming that
  \begin{equation} \Prb(D) \propto \frac{1}{g_{|D|}}. \end{equation}
  where $g_n$ data is fixed from the tuning phase of the algorithm.

  Let \(D\) be a rooted plane curve of \(n\) vertices and \(a\) be the root flag in \(D\). Observe first that the three pairs of reversing transitions \((\RIp, \RIm)\), \((\RIIp, \RIIm)\), and \((\RIII, \RIII)\) all change the number of vertices by distinct complementary amounts; hence any two diagrams can be related by at most one pair of transitions, or, if their vertex counts agree, a re-rooting. The detailed balance proofs are nearly identical to the Boltzmann case, substituting in the new value of \(\Prb\) and the new transition probabilities. Additional details can be found in Appendix~\ref{sec:detailedbalanceproofs}.
Provided \(p_1, p_2, p_3 > 0\), \((p_1+p_2+p_3) < 1\), and \(\ell = 1\), the Markov chain can reach all plane curves (as all plane curves can be changed through a crossing-non-increasing pathway to the curve with one crossing). Hence in this case, the Markov chain is ergodic.
\end{proof}

By Corollary~\ref{cor:noreroot}, we are able to simplify the Markov chain by re-rooting between steps, and only using the five Reidemeister transition operations as we did in the Boltzmann case. Again, in our simulations (detailed in the next section) we chose \( (p_1,p_2,p_3)=(2/5,2/5,1/5) \).

\section{Simulations and Data}
\label{sec:data}

We implemented both MCMC samplers in \texttt{c++}. Plane curves are stored as combinatorial maps; a collection of vertices, edges, and flags with bidirectional references between flags and their vertices, as well as flags and their edges. At each step, an flag is selected from the diagram at random; this takes \(O(1)\) time, and the parameter \(0 \le \alpha < 1\) is sampled uniformly at random. Both of the \(\RI\) moves, as well as the \(\RIII\) move take constant time. The \(\RIIp\) move requires an extra random number \(0 \le \gamma < 1\) which determines the second flag for the transition and is performed in constant time. The \(\RIIm\) move requires both the sampling of the additional number \(0 \le \beta < 1\) as well as a count of the face sizes diagonal to the bigon. A plane curve has average face degree strictly increasing and limiting on \(4\), so counting a face size requires a constant number of operations on average.

For low numbers \(n\) of crossings, it is also possible to sample plane curves uniformly through rejection sampling. A single sample is produced by sampling 4-valent maps uniformly until a plane curve is obtained. The maps are sampled via Gilles Schaeffer's bijection with blossom trees~\parencite{Schaeffer1997} using his \texttt{PlanarMap} software\parencite{SchaefferPlanarMap,PlanarMapLib}. The rejection step is simple, but plane curves are exponentially rare~\parencite{Schaeffer2004}, making this approach ineffective even for relatively small sizes: On a quad core 3.4Ghz Intel i5-7500 machine, sampling \(10^5\) 10-crossing curves takes 4.9 seconds, but sampling the same number of 100-crossing curves takes 712.6 seconds. For comparison, it only takes 21.9 seconds to sample \(10^5\) 100-vertex 4-valent maps.

In this section, we examine data from our simulations. First, we examine the distributions of plane curve sizes that our implementations produce. Then, we check how well our Wang-Landau implementation converges to the uniform distribution across fixed sizes by comparing statistics to the rejection sampler. The data of these sections are based on the following sampler runs:
\begin{enumerate}
\item \emph{Wang-Landau (WL)}. After tuning a Wang-Landau Markov chain to \(f < 10^{-8}\) with histogram flatness threshold \(\Delta = 0.99\), we drew a total of \(2 \times 10^7\) samples for sizes \(1 \le n \le 500\) with \(10^3\) steps between each sample. The tuning phase took approximately 500 minutes, and the sampling took 160 minutes. The sampled size probability distribution for this run is presented in Figure~\ref{fig:comparewlprb}. The minimum number of samples for any size is \(3.8933 \times 10^4\).
\item \emph{Rejection}. Using a rejection sampler for plane curves, we gathered \(10^4\) samples of plane curves with \(n\) vertices, for each \(n = 5m\) from \(5\) to \(100\). This took approximately \(40\) minutes.
\item \emph{All 4-valent maps}. Using a uniform sampler for all 4-valent maps, we gathered precisely \(10^4\) samples of 4-valent maps with \(n\) vertices, for each \(n = 5m\) from \(5\) to \(100\). This took approximately \(30\) seconds.
\end{enumerate}
Finally, we examine how our Wang-Landau sampler augmented to sample knot diagrams compares to the rejection sampler, as another check on the theoretical limiting distribution and our implementation.

\subsection{Size distributions}
\label{sec:sizedist}

We first examine the sample histograms of the Boltzmann and Wang-Landau samplers. This serves several purposes: First, as we are unable to sample diagrams of given fixed size, we would like to know how frequently we will sample a particular size using these methods. Second, in order to avoid correlated samples we would like our Markov chain to explore the full range of lengths frequently. Last, we can use comparisons of the sampled size distribution to better understand the counting sequence of plane curves.

\subsubsection{Boltzmann sampler implementation and the Boltzmann parameter \(z\)}
\label{sec:bzparams}

The MCMC sampler approximating the Boltzmann distribution on plane curves (described in Section~\ref{sec:boltzmann}) samples from a distribution,
\begin{equation} \Prb(D) \propto z^{|D|}, \end{equation}
where the parameter \(z\) affects the size of plane curves produced. Hence the probability of sampling any plane curve of size \(n\) is proportional to \(k_nz^n\). As noted above, we choose \(z < 1/\mu\) and so by Equation~\ref{eqn:asympkn} this probability is asymptotic to \((\mu z)^nn^{\gamma-2}\). Note that \(\gamma\) is expected to be \emph{negative} so any choice of \(z < \mu^{-1} \approx 0.0876\) will not, \textit{a priori} produce a finite local maximum.


To observe this we ran a number of experiments sampling \(10^6\) plane curves of a maximum size of \(200\) crossings with varying \(z\). These data produce the approximate size distributions of Figure~\ref{fig:bzsizehist}. This figure implies that it is difficult to pick \(z\) to obtain samples at large size while still sampling many objects of small size.

\begin{figure}[hbtp!]
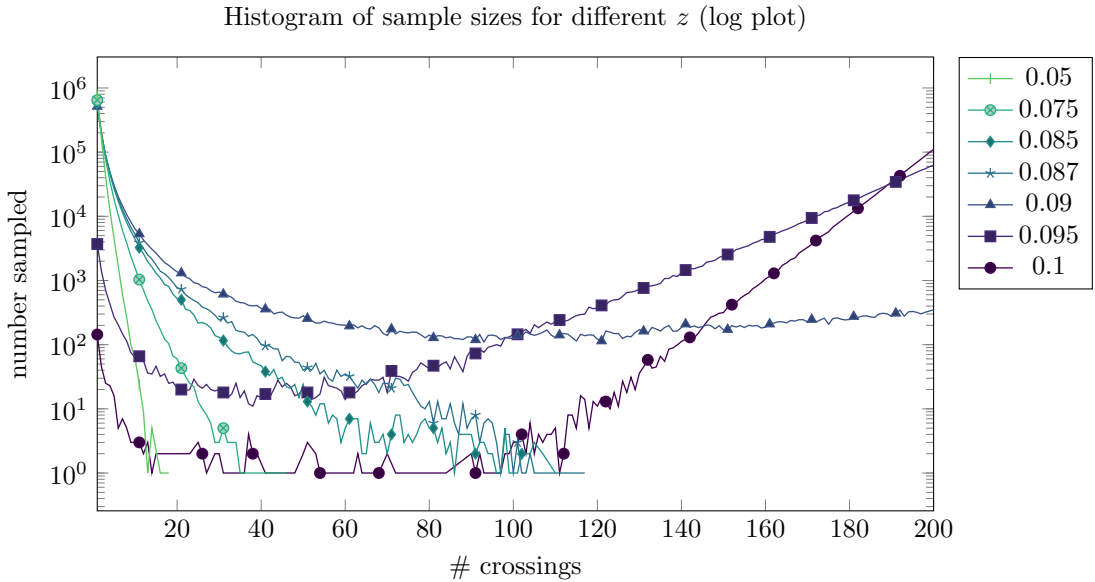

  \centering
  \includestandalone{mc_hist_1_100_zvar}
  \caption{Histogram of samples from various MCMC runs, with different \(z\)-values. The maximum-size plane curve was 200 vertices. The mixing time was \(10^3\) steps, and a total of \(10^6\) samples were drawn. The run for \(z=0.05\) took 65 seconds to complete, the run for \(z=0.1\) took 93 seconds, and run time generally increased with \(z\).}
  \label{fig:bzsizehist}
\end{figure}

We note that we want our algorithm to return to small sizes on a regular basis. First, returning to small sizes ``erases'' the entire object before producing a new sample. Second, the best known result on ergodicity relies on paths through small plane curves. Not enough is known about the connectivity of the space of plane curves to alleviate these concerns, providing strong justification for using the Wang-Landau sampler variant instead.



\subsubsection{Wang-Landau implementation}
\label{sec:wanglandaudata}

In comparison with the MCMC Boltzmann sampler, the tuning phase of the Wang-Landau sampler ensures that the size distribution sampled is actually approximately flat. Figure~\ref{fig:limiting_lgg} demonstrates how values of \( g_n \) approach the exact numbers of rooted plane curves, \( k_n \). Figure~\ref{fig:comparewlprb} presents the size distribution of a run of \(2 \times 10^7\) samples of size between \(1\) and \(500\) produced after tuning to \(f < 10^{-8}\) with \(\Delta = 0.99\).

\begin{figure}
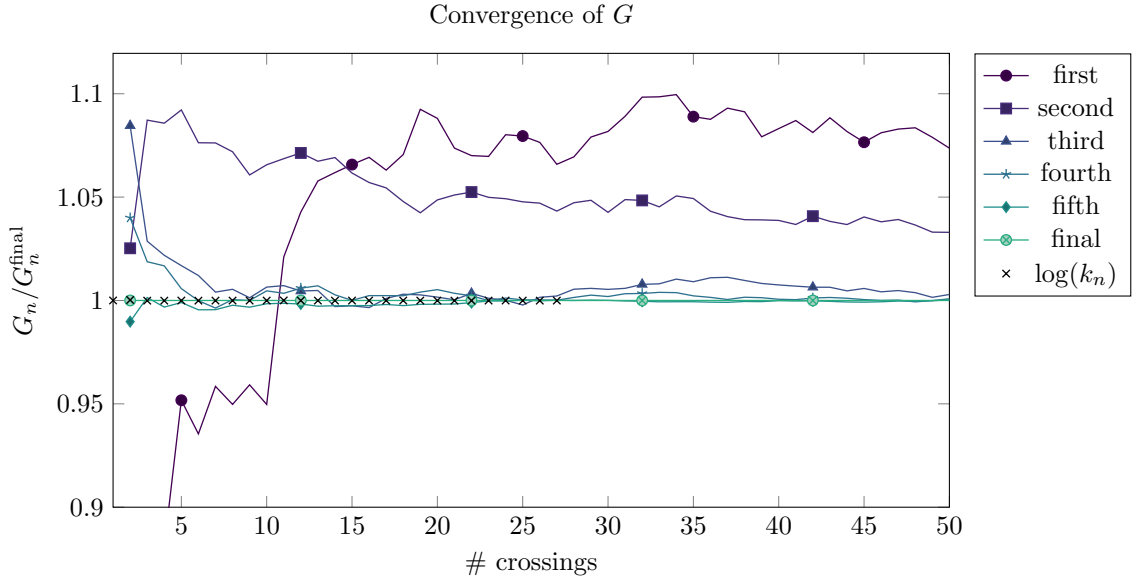

  \centering
  \includestandalone{mc_wl_flatten}
  \caption{As the tuning phase proceeds, the values \(G_n = \log g_n\) converge to \( G_n^{final} \). We also observe that \(G_n^{final} \approx \log k_n\) for available exact enumeration data, \(n \leq 27\).}
  \label{fig:limiting_lgg}
\end{figure}

\begin{figure}
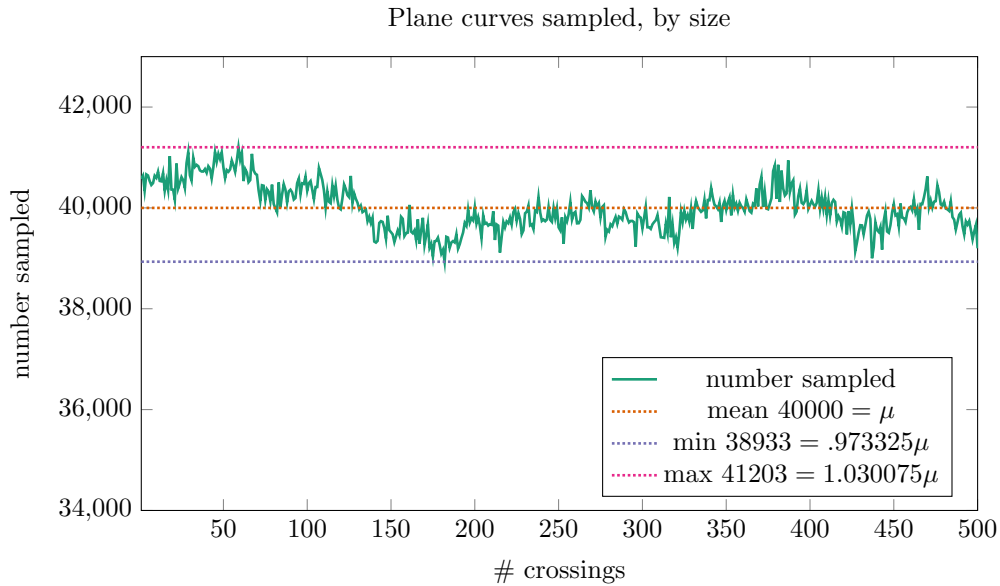

  \centering
  \includestandalone{wl_hist_counts}
  \caption{Distribution of plane curve sizes from a run of the Wang-Landau sampler. The sample histogram flatness is \(\max\{0.973325, 1- (1.030075-1)\} = 96.9925\%\).}
  \label{fig:comparewlprb}
\end{figure}

As mentioned, the Wang-Landau algorithm tuning step provides an estimation of ratios in the counting sequence for plane curves. We can use this data to provide approximations to the numbers of curves of given sizes. Using the tuning data for the run above (\(\ell=1, L=500, f < 10^{-8}, \Delta=0.99\)) we obtained approximate counts for \(n = 1\) to \(n = 27\) and can compare the approximations to the precise counts from~\parencite{ZinnJustin2009} in Table~\ref{tab:wltunecounts}. All approximate counts are within \(0.004\) of their exact value.

\begin{table}[hbtp!]
  \centering
\vspace{1em}
\includestandalone{wl_kn_est_table}
  \caption{Comparison of counts of rooted plane curves from the Wang-Landau
    tuning step with \(\ell=1, L=500, \varepsilon=10^{-8}, \Delta=0.99\), versus exact values gathered using another method~\parencite{ZinnJustin2009}.}
  \label{tab:wltunecounts}
\end{table}

We are able to obtain estimates for the unknowns \(\mu\) and \(\gamma\) in the predicted asymptotic growth formula \(k_n \sim C\mu^nn^{\gamma-2} \) from Wang-Landau \( g_n\) data. We attempted to use simple ratio estimates \(r_n = g_{n+1}/g_n \sim \mu \left(1 + \frac{\gamma-2}{n} \right)\), however the results are extremely noisy. Instead we used linear regression to fit to the model
\begin{align}
  \log g_n &= \log C + n \log \mu + (\gamma-2)\log n
\end{align}
Since we expect the \(g_n\) data to be noisier for larger \(n\) we fitted the above linear form to a subset of the data \( \{ g_n \mid 10 \leq n \leq n_{max}\} \). We then varied $n_{max}$ to get a rough estimate of corrections to the above asymptotic form. The resulting estimates (as functions of $n_{max}$) are shown in Figure~\ref{fig:lssq_est}. These results are consistent with earlier estimates \parencite{ZinnJustin2009} of $\mu \approx 11.416$ and $\gamma = -\frac{1+\sqrt{13}}{6} \approx -0.768$.

\begin{figure}
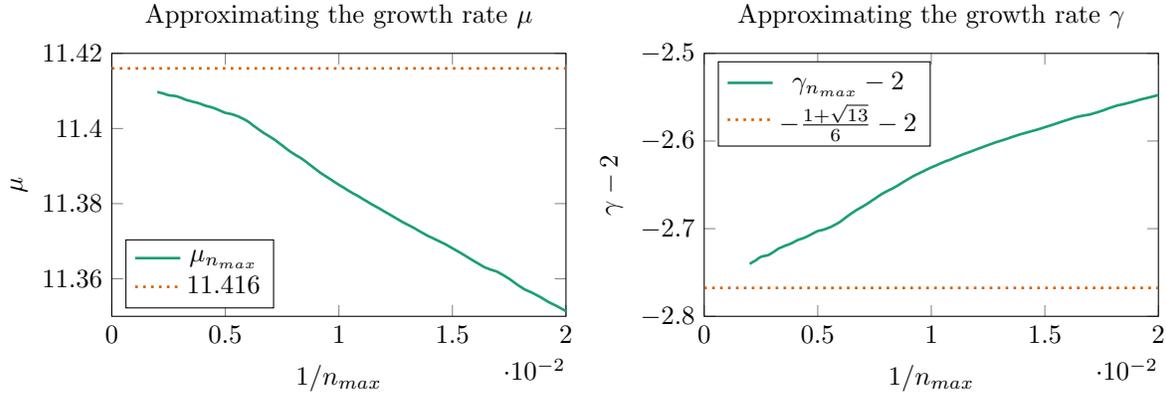

  \centering
  \includestandalone{mc_wl_lssq_mu}
  \includestandalone{mc_wl_lssq_gamma}
  \caption{Estimates of \(\mu\) and \(\gamma\) using least squares fits on \( \{g_n \mid 10 \leq n \leq n_{max} \} \) plotted as a function of \( n_{max}\).}
  \label{fig:lssq_est}
\end{figure}

\subsection{Face degrees in plane curves}
\label{sec:facedegrees}



In this section we seek statistic which distinguish plane curves from all 4-valent maps. A simple class of statistics to gather from random maps are vertex and face degrees. Plane curves, as a subclass of 4-valent planar maps, only ever have vertex degree 4, so only face statistics are nontrivial. Euler's formula implies that the average face degree for \emph{any} 4-valent map of \(n\) crossings is \(4n/(n+2)\), so this also cannot distinguish plane curves from its superclass. However, we will see that the \emph{distribution of face degrees} differs.

\subsubsection{Face degree probabilities}
\label{sec:facedegprob}

We check the counts of faces of fixed degree which appear (for a curve of \(n\) crossings, this takes \(O(n)\) time to compute as all plane curves have \(4n\) flags, each of which needs only be visited once). These quantities are expected to exhibit linear growth, in agreement with the results for a large number of map classes~\parencite{Liskovets99}. In fact, it is known:
\begin{theorem}
  Let \(k \ge 1\), and let \(p_{n,k}\) denote the probability that an arbitrary face of a random plane curve of \(n\) crossings has degree \(k\). Namely, notice that \((n+2)p_{n,k}\) is the expected number of degree \(k\) faces in a random plane curve of \(n\) crossings. Then 
  \begin{equation} 1 > \limsup_{n\to\infty}{p_{n,k}} \ge \liminf_{n\to\infty}{p_{n,k}} > 0. \end{equation}
\end{theorem}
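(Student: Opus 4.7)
The plan is to apply a Kesten-style pattern theorem for plane curves, analogous to the one established for knot diagrams in \parencite{Chapman2016}. Such a theorem asserts that if an attachable sub-tangle $P$ can be realized in at least three disjoint copies inside some plane curve, then there exists a constant $\alpha(P)>0$ such that, with probability tending to $1$ as $n\to\infty$, a uniformly random rooted $n$-vertex plane curve contains at least $\alpha(P)\,n$ disjoint occurrences of $P$. Since every $n$-vertex plane curve has exactly $n+2$ faces by Euler's formula, the theorem reduces to exhibiting attachable tangles that force the presence of faces of prescribed degree.

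For the lower bound, for each $k\geq 1$ I would construct a 2-leg tangle $P_k$ whose interior contains a face of degree exactly $k$ that survives substitution of $P_k$ into any ambient plane curve. The cases $k=1,2$ are immediate from the outputs of the $\RIp$ and $\RIIp$ moves, which contain, respectively, a monogon and a bigon as interior faces. For $k\geq 3$, one extracts a neighborhood of a $k$-face from a sufficiently large plane curve, whose existence can be verified either by small-case construction or by applying a sequence of flat Reidemeister moves starting from the figure-eight (noting that each $\RIp$ on an edge of a face increases that face's degree by $2$, and $\RIIp$ produces new faces of degrees $2$ and $d+2$, together generating faces of every degree). Three disjoint copies of $P_k$ can then be placed along a single long edge by iterated insertion, satisfying the hypothesis of the pattern theorem. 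Each disjoint occurrence contributes one distinct face of degree $k$, so the expected number of $k$-faces is at least $\alpha(P_k)\,n(1-o(1))$, whence
\[ (n+2)\,p_{n,k} \;\geq\; \alpha(P_k)\,n\,(1-o(1)), \qquad \text{so} \qquad \liminf_{n\to\infty} p_{n,k} \;\geq\; \alpha(P_k) \;>\; 0. \]

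For the upper bound, apply the same argument to an attachable tangle $P_j$ containing a face of some degree $j\neq k$ (take $j=1$ if $k\neq 1$, and $j=2$ otherwise). The expected number of faces of degree $j$, and hence of degree different from $k$, is at least $\alpha(P_j)\,n(1-o(1))$, giving
\[ (n+2)\,(1-p_{n,k}) \;\geq\; \alpha(P_j)\,n\,(1-o(1)), \qquad \text{so} \qquad \limsup_{n\to\infty} p_{n,k} \;\leq\; 1-\alpha(P_j) \;<\; 1. \]

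The main obstacle is the pattern theorem itself: the argument in \parencite{Chapman2016} is written for knot diagrams, and one must verify that the proof (which compares the exponential growth rate of all plane curves with that of plane curves forbidden to contain $P$, and shows the latter is strictly smaller, so that the forbidden class contributes only an exponentially small fraction of diagrams) transfers to the shadow setting. The crossing-information content of the knot-diagram proof is not essential to the counting argument, so the transfer should be routine. Once this is in hand, the remaining steps — formalizing attachability of $P_k$ as a 2-leg tangle and exhibiting three disjoint embeddings in a single plane curve — are elementary combinatorial checks.
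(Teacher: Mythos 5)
Your proposal is correct and follows essentially the same route as the paper: both invoke the pattern theorem for plane curves of \parencite{Chapman2016} to guarantee a linear number of occurrences of a substructure containing a \(k\)-gon (giving \(\liminf_{n\to\infty} p_{n,k} > 0\)), and both obtain \(\limsup_{n\to\infty} p_{n,k} < 1\) from the positive limiting density of faces of some other degree \(\ell \ne k\). The only difference is one of emphasis — you spell out the construction and attachability of the tangles \(P_k\) and treat the transfer of the pattern theorem to the shadow setting as the main outstanding step, whereas the paper cites the plane-curve pattern theorem as already established and leaves the choice of prime substructure implicit.
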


\begin{proof}
  Certainly, \(0 \le p_{n,k} \le 1\) for all \(n, k\) as they denote probabilities; furthermore, \(\sum_{k=1}^{n} {p_{n,k}} = 1\). The pattern theorem for plane curves~\parencite{Chapman2016} says that, for any prime substructures \(T_k\) containing a \(k\)-gon, there are constants \(c_k > 0\), \(1 > d_k > 0\) and \(N \ge 0\) so that for all \(n \ge N\) the probability that an arbitrary plane curve of size \(n\) contains more than \(c_kn\) copies of \(T_k\) is at least \(1 - d_k^n\).

  So for \(n \ge N\), \((n+2)p_{n,k} > (1 - d^n)c_kn\). Solving for \(p_{n,k}\) and passing to \(\liminf\) yields,
  \begin{equation} \liminf_{n\to\infty} p_{n,k} > \liminf_{n\to\infty} (1 - d^n)c_k \frac{n}{n+2} = c_k. \end{equation}

  That \(1 > \limsup_{n\to\infty}{p_{n,k}}\) for any \(k\) follows from that for \(\ell \ne k\) the
  existence of a \(\ell\)-gon lowers the number of faces which may be \(k\)-gons. That all \(\liminf_{n\to\infty} p_{n,\ell} > 0\) yields the result.
\end{proof}

The following proposition summarizes the results we will use for planar 4-valent maps.

\begin{theorem}[Follows from~\parencite{Gao_1994,Liskovets99}]
  The limiting expected number (as maps grow large) of faces of degree \(k\) in a random 4-valent map is,
  \begin{equation} \frac{n+2}{k} [y^k]\left( (1/3)(1 + y/2)^{-\frac 12}(1 - 5y/6)^{-\frac 32} \right). \end{equation}
  \label{thm:limit4vslope}
\end{theorem}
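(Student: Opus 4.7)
The plan is to reduce the statement to the asymptotic root-vertex-degree distribution for random planar quadrangulations via planar duality, and then invoke the Gao--Liskovets framework directly. First, by Euler's formula any 4-valent planar map $M$ with $n$ vertices has $2n$ edges and $n+2$ faces, and its planar dual $M^*$ is a quadrangulation with $n+2$ vertices and $n$ faces. Duality extends bijectively to rooted maps (via a canonical rule sending the root flag to a dual flag), and carries each face of degree $k$ in $M$ to a vertex of degree $k$ in $M^*$. Consequently the uniform distribution on rooted 4-valent maps with $n$ vertices corresponds, under duality, to the uniform distribution on rooted quadrangulations with $n$ faces, and the expected number of degree-$k$ faces in the former equals the expected number of degree-$k$ vertices in the latter.

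Next, I would separate overall size from the degree distribution. Writing $F_{n,k}$ for the expected number of degree-$k$ faces, we have $F_{n,k} = (n+2)\,\Prb_n(\deg f_{*} = k)$, where $f_{*}$ is a uniformly random face of the random 4-valent map. By an averaging argument over the $n+2$ possible face-rootings, $\Prb_n(\deg f_{*} = k)$ converges, as $n\to\infty$, to the asymptotic probability that the root vertex of a uniformly random rooted quadrangulation with $n$ faces has degree $k$. This root-vertex degree distribution is exactly what Gao~\parencite{Gao_1994} and Liskovets~\parencite{Liskovets99} compute: introduce the bivariate generating function $Q(t,x) = \sum_{n,k} q_{n,k}\,t^n x^k$ of rooted quadrangulations marking size $n$ and root-vertex degree $k$, use Tutte's quadratic method to obtain a closed form for $Q(t,x)$, and apply singularity analysis in $t$ at the dominant singularity. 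After the natural change of variable aligning $x$ with the theorem's indeterminate $y$, the limit generating function is exactly $\phi(y) = \tfrac{1}{3}(1+y/2)^{-1/2}(1-5y/6)^{-3/2}$.

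The main obstacle is twofold: deriving the explicit closed form $\phi(y)$ from the quadratic functional equation (the real combinatorial content of the Gao--Liskovets analysis), and interpreting $[y^k]\phi(y)$ correctly. Because quadrangulations are naturally rooted at a flag rather than uniformly over vertices, the resulting generating function is automatically size-biased by degree, so $[y^k]\phi(y)$ equals $k$ times the asymptotic probability that a random face of $M$ has degree $k$; this is why the theorem divides by $k$ before multiplying by $n+2$ to obtain an expected count. A useful consistency check is $\phi(1) = \tfrac{1}{3}(3/2)^{-1/2}(1/6)^{-3/2} = 4$, matching the limiting mean face degree $4n/(n+2) \to 4$ forced by Euler's formula on any 4-valent map. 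With $\phi(y)$ and its combinatorial meaning identified, the theorem is a direct application of the Gao--Liskovets machinery.
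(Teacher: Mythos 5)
Your proposal is correct and follows essentially the same route as the paper: dualize 4-valent maps with \(n\) vertices to quadrangulations with \(n+2\) vertices, quote the Gao--Liskovets asymptotic root-degree distribution, and undo the degree-biasing inherent in flag-rooting with the factor \(1/k\) before multiplying by the \(n+2\) faces. The only cosmetic difference is that the paper reaches the quadrangulation root-vertex-degree series by chaining Gao's Theorem~1 for arbitrary rooted maps through duality and the map--quadrangulation bijection rather than working directly with the quadrangulation generating function as you suggest; your normalization check \(\phi(1)=4\) correctly pins down the constant \(1/3 = 4\cdot\tfrac{1}{12}\) that the paper obtains from Liskovets' relation \(p_k = 4q_k/k\).
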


\begin{proof}
  This is a rephrasing of selected results in~\parencite{Gao_1994,Liskovets99} in the language of 4-valent maps. Theorem 1 of \parencite{Gao_1994} says that the generating series of limiting probabilities \(q_k\) (as maps grow large) that the root vertex in an arbitrary map has degree \(k\) is,
  \begin{equation} \sum_{k\ge 0}{q_k y^k} = (1/12)(1 + y/2)^{-\frac 12}(1 - 5y/6)^{-\frac 32}. \end{equation}
  Duality of the class of rooted planar maps says the same result holds for faces. The bijection between \(m\)-edged rooted planar maps and \(m\)-faced rooted planar quadrangulations then says the same result holds for vertices in quadrangulations (see for instance the proof of Proposition 12 in~\parencite{Benjamini_2013}).

  Section 2.5 in~\parencite{Liskovets99} relates, for a quadrangulation of \(m\) vertices, the probability \(q_{k,m}\) that the root vertex has degree \(k\) to the probability \(p_{k,m}\) that an arbitrary vertex has degree \(k\) by,
  \begin{equation} p_{k,m} = \frac{4(m-2)}{m} \frac{q_{k,m}}k \sim 4\frac{q_{k,m}}{k}, \end{equation}
  whence \(p_{k} = \lim_{m\to\infty} p_{k,m}\) and \(q_{k} = \lim_{m\to\infty} q_{k,m}\) are related by \(p_{k} = 4q_k/k\). Noting that 4-valent maps with \(n\) vertices are dual to quadrangulations with \(n+2\) vertices yields the result.
\end{proof}

We have computed linear regressions using least squares for 4-valent maps sampled using the Schaeffer bijection, plane curves sampled using rejection, and plane curve sampled using our Wang-Landau sampler for face degrees from 1 to 9 and presented these data in Table~\ref{tab:facedegfits} alongside precise densities obtained via Taylor series expansion on the result of Theorem~\ref{thm:limit4vslope}. We present these data for small faces sizes in Figure~\ref{fig:facedegcounts}. In all cases, the data of the Wang-Landau sampler resides within the error bars of the rejection curve sampler data, and away from the arbitrary 4-valent map data.

\begin{table}
  \centering
  \renewcommand*{\arraystretch}{1.5}
  \begin{tabular}{|c|c|c|c|c|}
    \hline
    \(k\) & 4-valent \(p_k\), theor.                     & 4-valent \(p_k\)              & Rejection \(p_k\)               & WL \(p_k\)                    \\
    \hline
    1     & \(\tfrac {1}{3} = 0.\overline3\)             & \(0.3328 \pm 2\cdot10^{-4}\)  & \(0.3496 \pm 2\cdot10^{-4}\) & \(0.35036 \pm 4\cdot10^{-5}\) \\
    2     & \(\tfrac {1}{6} = 0.1\overline6\)            & \(0.1662 \pm 2\cdot10^{-4}\)  & \(0.1407 \pm 1\cdot10^{-4}\) & \(0.14056 \pm 3\cdot10^{-5}\) \\
    3     & \(\tfrac {13}{108} = 0.12\overline{037}\)    & \(0.1200 \pm 2\cdot10^{-4}\)  & \(0.1222 \pm 1\cdot10^{-4}\) & \(0.12257 \pm 3\cdot10^{-5}\) \\
    4     & \(\tfrac {55}{648} \approx 0.08488\)         & \(0.0846 \pm 1\cdot10^{-4}\)  & \(0.0831 \pm 1\cdot10^{-4}\) & \(0.08298 \pm 2\cdot10^{-5}\) \\
    5     & \(\tfrac {83}{1296} \approx 0.06404\)        & \(0.0641 \pm 2\cdot10^{-4}\)  & \(0.0663 \pm 2\cdot10^{-4}\) & \(0.06624 \pm 2\cdot10^{-5}\) \\
    6     & \(\tfrac {377}{7776} \approx 0.048448\)      & \(0.0484 \pm 2\cdot10^{-4}\)  & \(0.0500 \pm 1\cdot10^{-4}\) & \(0.04977 \pm 2\cdot10^{-5}\) \\
    7     & \(\tfrac {1751}{46656} \approx 0.03753\)     & \(0.03769 \pm 8\cdot10^{-5}\) & \(0.0395 \pm 1\cdot10^{-4}\) & \(0.03941 \pm 1\cdot10^{-5}\) \\
    8     & \(\tfrac {101}{3456} \approx 0.02922\)       & \(0.0292 \pm 1\cdot10^{-4}\)  & \(0.0305 \pm 1\cdot10^{-4}\)  & \(0.03059 \pm 1\cdot10^{-5}\) \\
    9     & \(\tfrac {115825}{5038848} \approx 0.02299\) & \(0.0232 \pm 1\cdot10^{-4}\)  & \(0.02419 \pm 8\cdot10^{-5}\) & \(0.02419 \pm 1\cdot10^{-5}\) \\
    \hline
  \end{tabular}\\
  \caption{Limiting face degree densities. Theoretical fits for 4-valent \(p_k\) come from Taylor series expansion of the result
    in Theorem~\ref{thm:limit4vslope}. Experimental columns are the slopes of linear functions fit to data using least squares.}
  \label{tab:facedegfits}
\end{table}

We note that: Random curve diagrams have fewer bigons and quadrangles than generic 4-valent maps. This phenomenon is specific to degrees 2 and 4 (at least for face degrees at most 9); every other degree face is more common in random plane curves.

\begin{figure}[hbtp!]
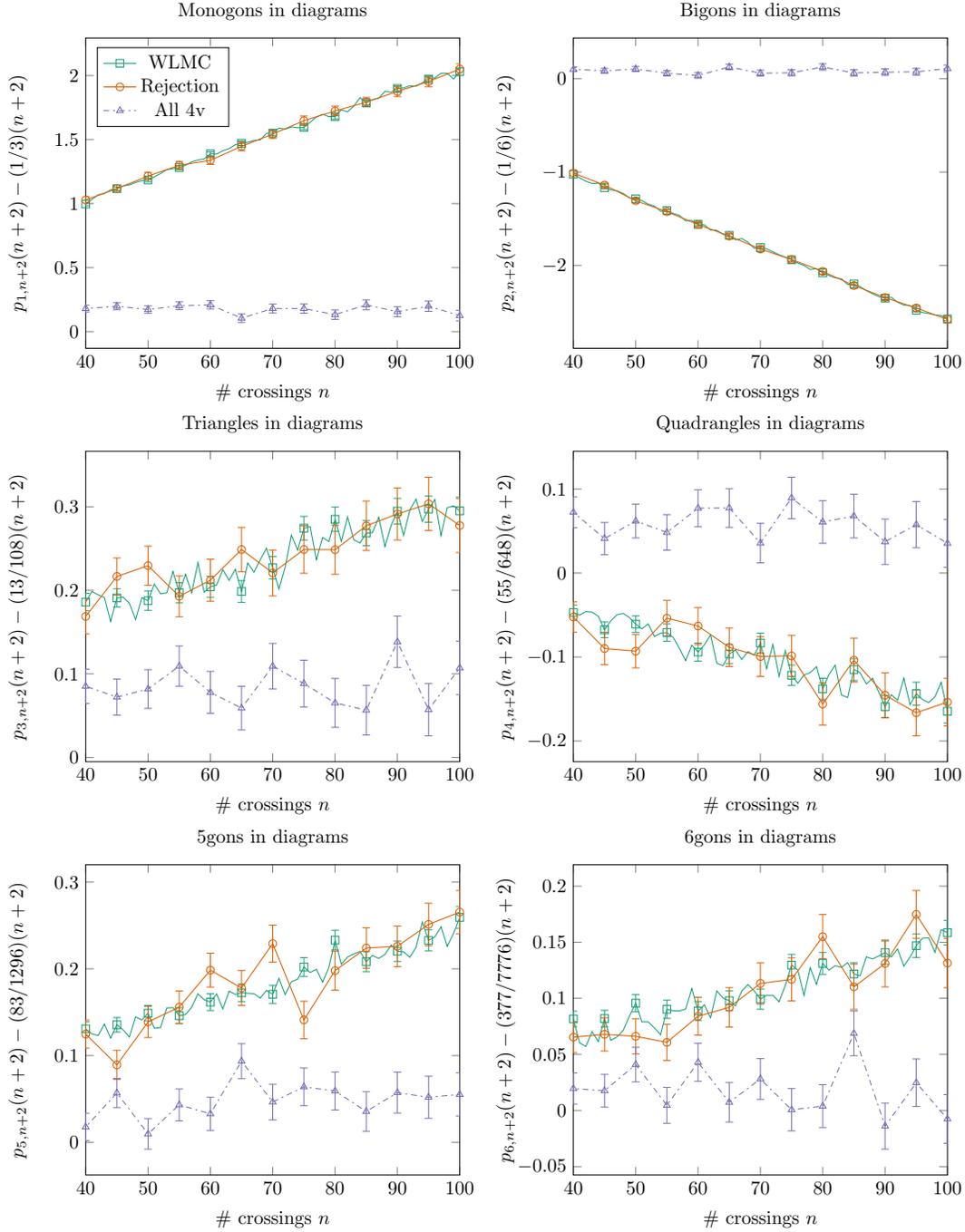

  \centering
  \includestandalone{face_d1_plot_deltas}
  \includestandalone{face_d2_plot_deltas}
  \includestandalone{face_d3_plot_deltas}
  \includestandalone{face_d4_plot_deltas}
  \includestandalone{face_d5_plot_deltas}
  \includestandalone{face_d6_plot_deltas}
  \caption{Average counts \(p_{k,n+2}(n+2)\) of faces of degrees 1--6.}
  \label{fig:facedegcounts}
\end{figure}

It is furthermore expected of classes of random maps that, for fixed size, number of faces of fixed degree \(k\) is a normally distributed statistic~\parencite{Drmota2013}. In Figure~\ref{fig:faced2dist} below, we compare distributions of different \(k\)-gon ratios for \(n=40\) and \(n=100\) crossings. As expected, the curves show a close similarity between the uniformly sampled curves and the Wang-Landau MCMC sampled curves. In the cases of \(1\)- and \(2\)-gons, it is easy to see a difference from the all 4-valent map sampler. In the case of larger faces, the differences in averages are on a much smaller order, and the curves are no longer possible to distinguish. In all cases, it seems that as the number of crossings \(n\) grows large, the distributions are approximately normal.

\begin{figure}[hbtp!]
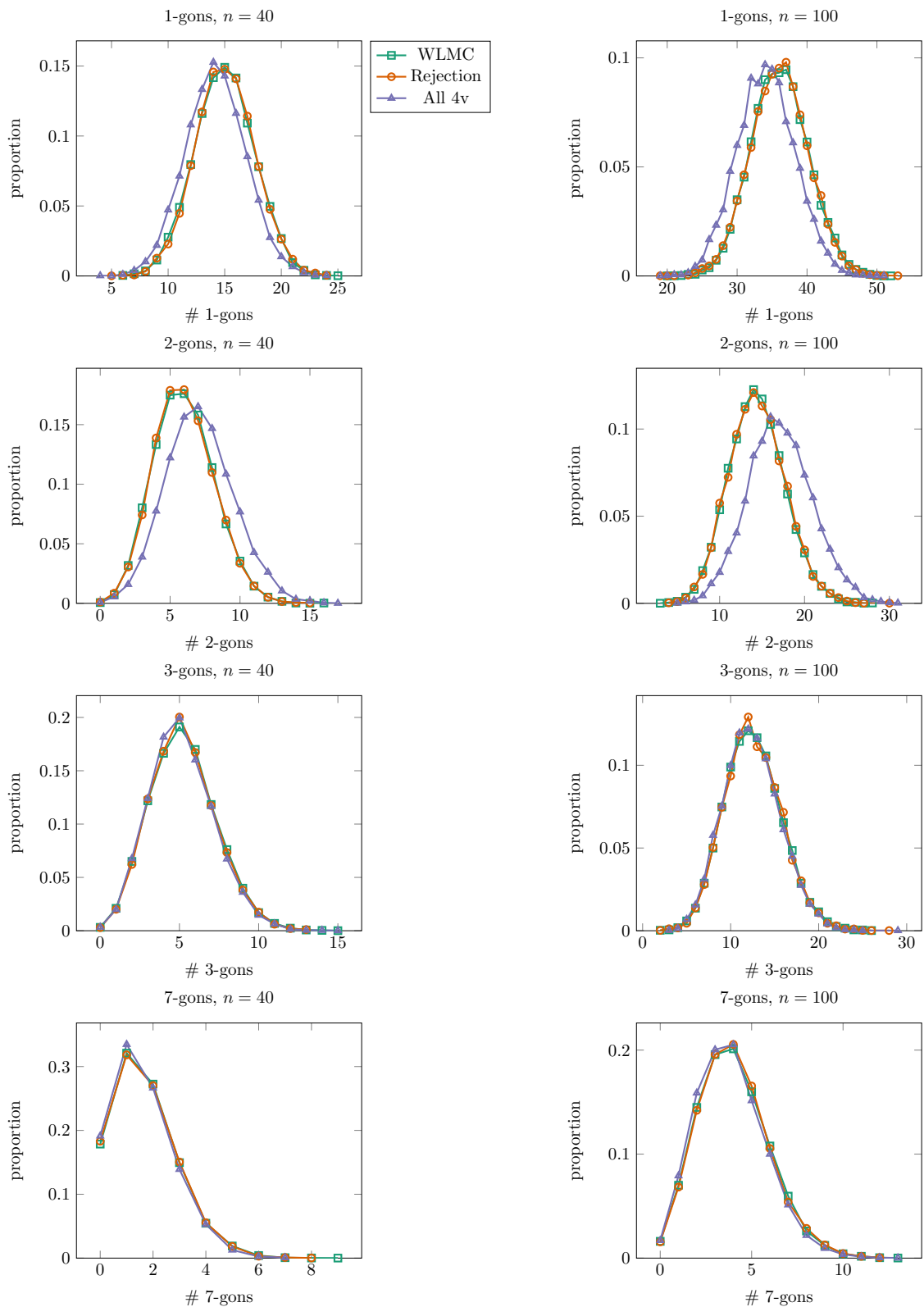

  \centering
  \begin{subfigure}[t]{0.45\linewidth}
    \includestandalone{hist_1gons_40}
  \end{subfigure}
  \hfill
  \begin{subfigure}[t]{0.45\linewidth}
    \includestandalone{hist_1gons_100}
  \end{subfigure}
    \begin{subfigure}[t]{0.45\linewidth}
    \includestandalone{hist_2gons_40}
  \end{subfigure}
  \hfill
  \begin{subfigure}[t]{0.45\linewidth}
    \includestandalone{hist_2gons_100}
  \end{subfigure}
    \begin{subfigure}[t]{0.45\linewidth}
    \includestandalone{hist_3gons_40}
  \end{subfigure}
  \hfill
  \begin{subfigure}[t]{0.45\linewidth}
    \includestandalone{hist_3gons_100}
  \end{subfigure}
    \begin{subfigure}[t]{0.45\linewidth}
    \includestandalone{hist_7gons_40}
  \end{subfigure}
  \hfill
  \begin{subfigure}[t]{0.45\linewidth}
    \includestandalone{hist_7gons_100}
  \end{subfigure}
  \caption{Distribution of \(k\)-gon counts for \(n=40\) and \(n=100\)
    crossings, for various \(k\).}
  \label{fig:faced2dist}
\end{figure}

We note here that the face degree probabilities are closely related to the probability that a given Markov chain transition succeeds on a given diagram, although it is actually the quantity \(q_{k,n}= \frac{k p_{k,n}(n+2)}{4n} \approx \frac{kp_{n,k}}4\) discussed prior which is at play (transitions by definition occur at the root along the root face). Namely, the probability that an \(\RIII\) move can succeed is \(q_{3,n+2} \approx 0.09193\), the probability that an \(\RIm\) operation will succeed is \(q_{1,n+2} = 0.08759\), and the probability that an \(\RIIm\) operation will succeed is \(q_{2,n+2} \approx 0.07028\) (\emph{not taking into account the extra Metropolis-Hastings step required to create large faces or sites which would create monogons}, see the description of move \(\RIIm\) in Section~\ref{sec:markovchain}). 

We reinterpret briefly the probabilities \(p_{k,n+2}\) in the context of random knot diagrams. We note that in the case of prime alternating diagrams, face degrees are related to the hyperbolic volume of the resulting knot~\parencite{Obeidin16}. We further note that a random curve has \(\approx 0.35(n+2) > (n+2)/3\) monogons says that a random knot diagram has, on average, at least \(.35(n+2)\) vertices that have \emph{no} impact on the knot type and could be immediately reduced by a Reidemeister I move. One half of all crossing assignments for bigon vertices can be reduced by Reidemeister II moves, so a random knot diagram will have around \(0.14(n+2)/2\) bigons that can be removed by Reidemeister II moves.


\subsubsection{Maximum face degree}
\label{sec:maxfacedeg}

Another quantity of interest in the study of planar maps is the maximum vertex degree \(\Delta_n\) and the maximum face degree \(\Delta_n^*\). As noted above, \(\Delta_n = 4\) because all vertices are 4-valent, so we examine expectation of \(\Delta_n^*\). It is expected that this quantity exhibits \(\Theta(\log n + \log \log n)\) growth as it does for general maps. A result of~\parencite{Gao2000} has that for general maps the expected maximum face degree is precisely:
\begin{equation}
  \Exp(\Delta_n^*) = \frac{\ln(n)- \frac 12\ln(\ln(n))}{\ln(6/5)}.
\end{equation}

We present the difference between expectations and that of general maps in Figure~\ref{fig:largestface}. From this, it does not appear that plane 4-valent maps exhibit the same behavior as the general map case: Indeed, as the bijection between arbitrary maps and 4-valent maps makes both faces and vertices into 4-valent map faces, the expected maximum vertex degree \(\Delta_n^*\) is in fact an expectation of a maximum \(\mathbb E (\max{(\Delta_n, \Delta^*_n)})\) over both vertex and face degrees of arbitrary maps. We have plotted histograms of the maximum face degree distribution in curves and 4-valent maps for fixed size in Figure~\ref{fig:maxfvdist}. These distributions are clearly not Gaussian. We note that the histograms of all cases look similar even though they have differing means (see Figure~\ref{fig:largestface}), this can be explained by noting that the difference between the means is small and growing very slowly in \(n\).

\begin{figure}[hbtp!]
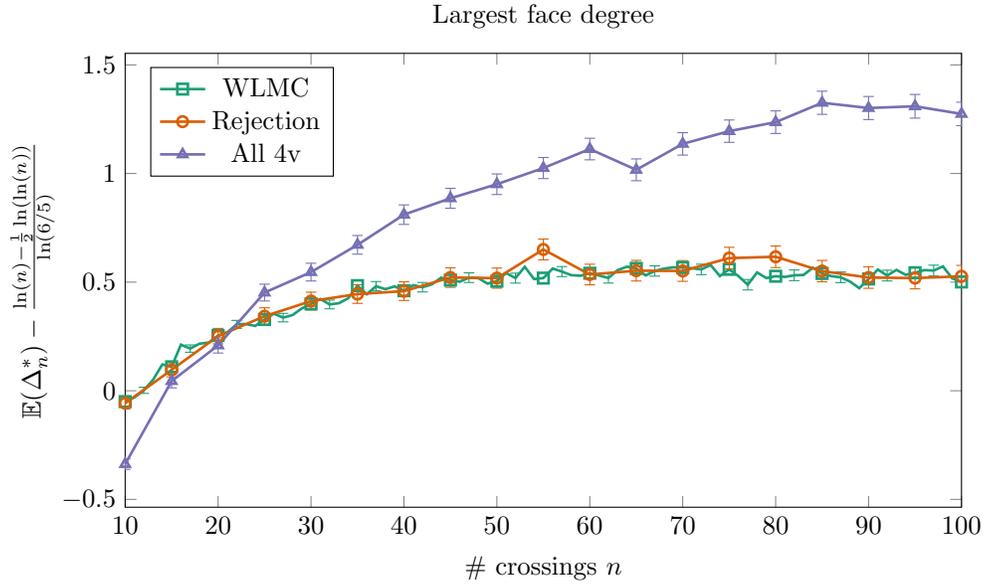

  \centering
  \includestandalone{largest_face_plot}
  \caption{Average size of largest face \(\Delta_n^*\).}
  \label{fig:largestface}
\end{figure}

\begin{figure}[hbtp!]
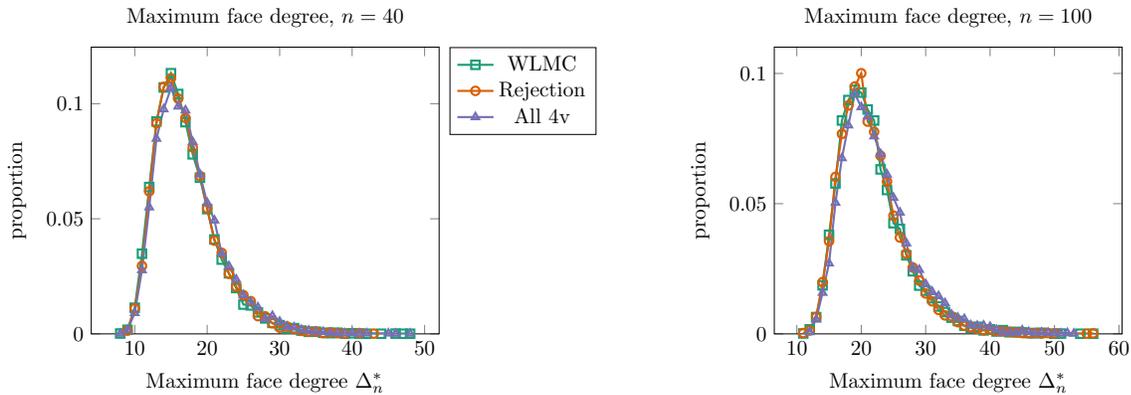

  \centering
  \begin{subfigure}[t]{0.45\linewidth}
    \includestandalone{maxfv_hist_40}
  \end{subfigure}
  \hfill
  \begin{subfigure}[t]{0.45\linewidth}
    \includestandalone{maxfv_hist_100}
  \end{subfigure}
  \caption{Distribution of largest face degree \(\Delta_n^*\) for \(n=40\) and \(n=100\)
    crossings.}
  \label{fig:maxfvdist}
\end{figure}

We present the data for \(\Exp(\Delta_n^*)\) of the Wang-Landau MCMC sampler up to 500 crossings---it is impractical to gather samples of this size from the rejection sampler---in Figure~\ref{fig:wllongtermmaxfv}. The \(\Theta(\log n)\) trend continues, as hypothesized. We note that the quantity \(\Exp(\Delta_n^*)\) is related to the success rate of the \(\RIIm\) transition, and suggests that the rejection rate for creating large faces through this transition is roughly bounded by \(1/\Theta(\log n)\).

\begin{figure}
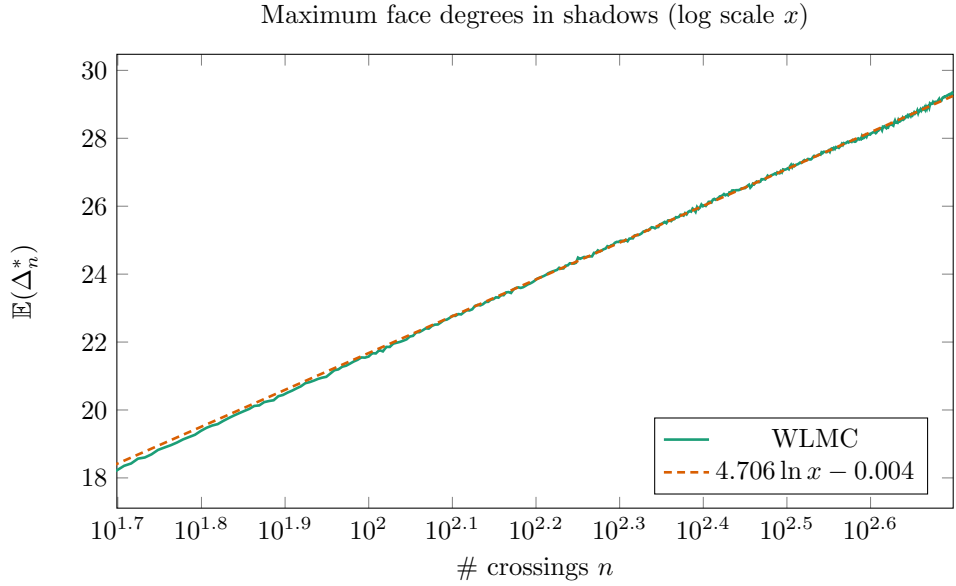

  \centering
  \includestandalone{wl_maxfv_longterm}
  \caption{Expected maximum face valence in plane curves appears to grow logarithmically.}
  \label{fig:wllongtermmaxfv}
\end{figure}

\subsection{Average Casson invariant}
\label{sec:v2ivt}

Our results above suggest that objects we sample using our Wang-Landau algorithm are giving the same statistics as those generated using rejection sampling. To further test this idea, we can compare further plane curve statistics.

Plane curves are equivalent to Arnol'd's spherical curves~\parencite{Arnold1995}. Thus, we can check \(\Z\)-valued spherical curve invariants for plane curves (these are not defined in the case of a general 4-valent map). These are both simple to compute and interesting knot theoretically: By following the crossings in order around the plane curve and counting those which are ``interlaced'', we are able to compute \(-\frac 12 (2St+J^+)\) in \(O(n^2)\) time (we direct the reader to~\parencite{Arnold1995} for more details). This curve invariant is in fact related to a knot invariant, the finite type invariant \(v_2\), also called the Casson invariant: \(-\frac 18 (2St + J^+)\) is the expected value of \(v_2\) over \emph{all possible over-under crossing sign assignments} to the plane curve. A comparison of \(\Exp(v_2)\) data for Wang-Landau MCMC and rejection samples are presented in Figure~\ref{fig:avgstj}; the data are remarkably consistent.

\begin{figure}[hbtp!]
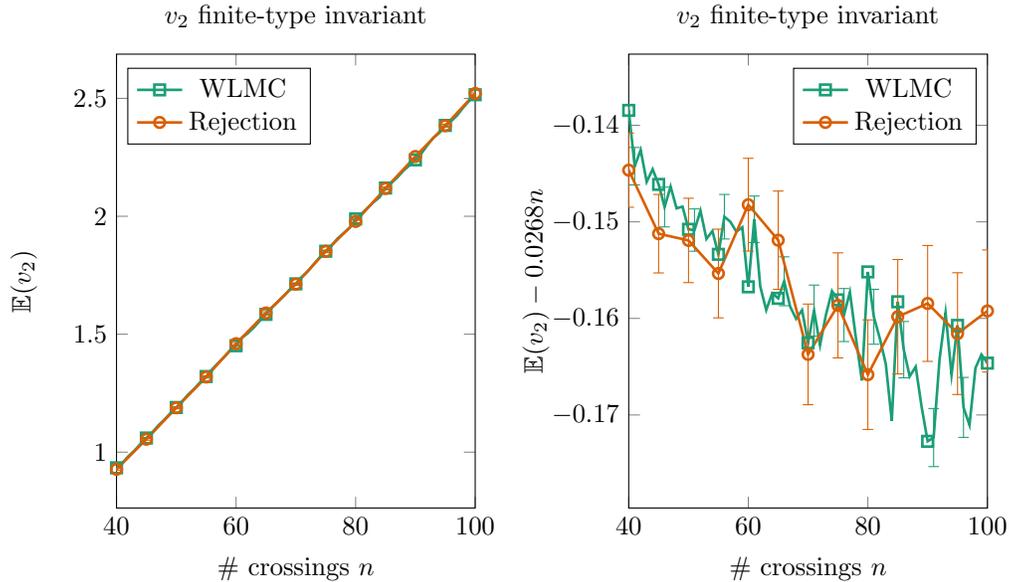

  \centering
  \includestandalone{avg_v2_plot_slopy}
  \includestandalone{avg_v2_plot}
  \caption{(Left) Average \(-\frac 18 (2St + J^+) = \Exp(v_2)\). (Right) Same statistic with approximate leading linear behavior subtracted to amplify detail. As this statistic is not
    well-defined for an arbitrary \(4\)-valent map, that data is not present.}
  \label{fig:avgstj}
\end{figure}

We present the data for \(\Exp(v_2)\) up to \(500\) vertices generated from the Wang-Landau algorithm in Figure~\ref{fig:wllongtermv2inv}. Our data suggest that the average \(v_2\) invariant grows linearly with the number of crossings with a limiting slope of \(0.0268 \pm 0.001\). As a comparison, Even-Zohar et al.~\parencite{EvenZohar2014} prove that for the Petaluma model of random knots, the expectation of \(v_2\) grows quadratically with size with leading coefficient \(1/24 \approx 0.0417\). As the petal diagrams of size \(n\) of the Petaluma model can be viewed as ``star diagrams'' with \(\frac{n^2-3n}{2}\) crossings, size in the Petaluma model is related quadratically to that of our model. Thus, it is reasonable that our average \(v_2\) data should grow linearly. It is also expected that the distribution of \(\Exp(v_2)\) over curves of a fixed size tends to be normal; we present histograms of this in Figure~\ref{fig:v2ivtdist} that appears to agree with this hypothesis.

\begin{figure}[hbtp!]
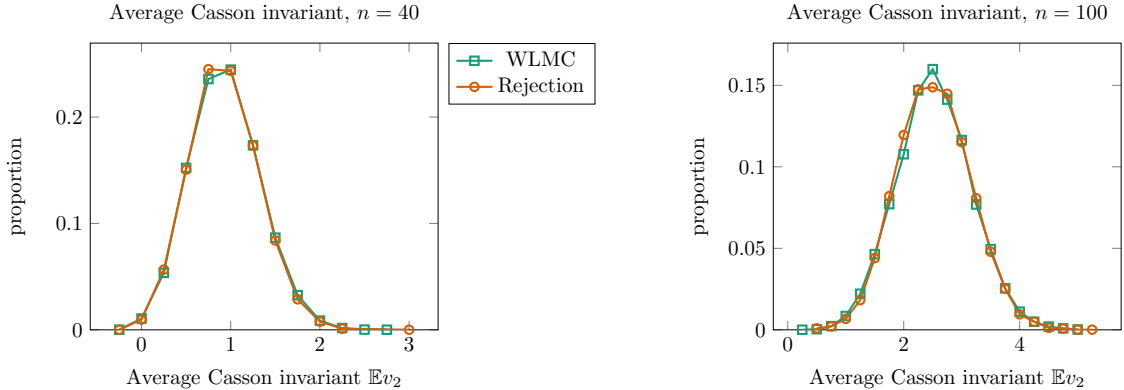

  \centering
  \begin{subfigure}[t]{0.45\linewidth}
    \includestandalone{v2inv_hist_40}
  \end{subfigure}
  \hfill
  \begin{subfigure}[t]{0.45\linewidth}
    \includestandalone{v2inv_hist_100}
  \end{subfigure}
  \caption{Distribution of mean \(v_2\) invariant for \(n=40\) and \(n=100\)
    crossings.}
  \label{fig:v2ivtdist}
\end{figure}


\begin{figure}
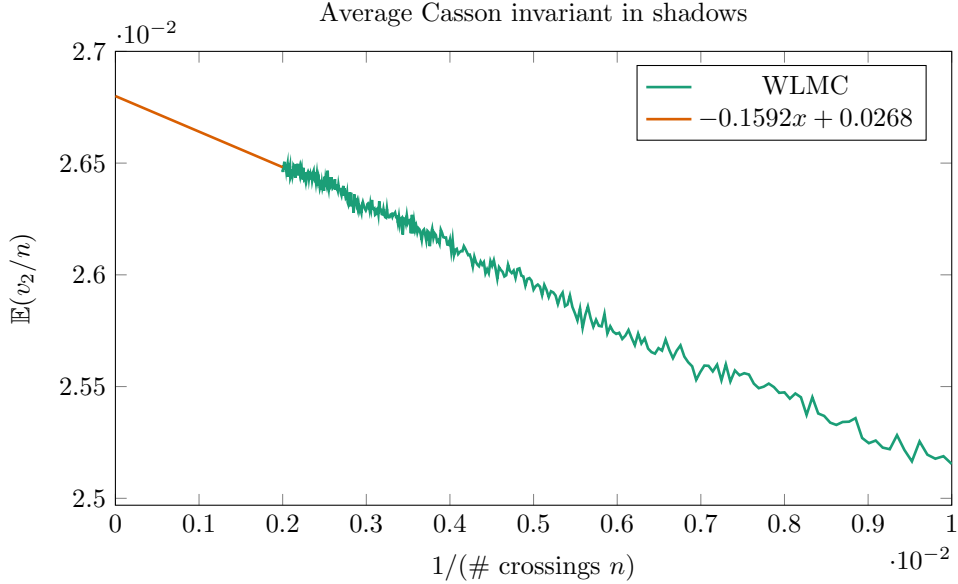

  \centering
  \includestandalone{wl_v2inv_longterm}
  \caption{The average \(v_2\) invariant appears to have linear dominant order growth. Here we present its ratio by number of crossings for finer resolution, where a na\"ive least squares linear regression suggests a slope of \((2.68 \pm 0.1)\times 10^{-2}\).}
  \label{fig:wllongtermv2inv}
\end{figure}

\subsection{Knotting probabilities}
\label{sec:knotprobs}

The main aim of constructing these Markov chains was to provide an efficient mechanism to sample large random knot diagrams. We have used the Wang Landau chain to sample random plane curves which we then use to construct random knot diagrams. This last step is done by mapping each vertex to an over or under crossing uniformly at random. We then use knot invariants to determine the topology of the resulting diagram and we compare the resulting knotting probabilities to those found using the rejection sampler.

In~\parencite{Chapman2016} the first author gathered data for knotting probabilities in knot diagrams of size up to 100 crossings. We have used our Wang-Landau sampler to produce random knot diagrams and classify them by HOMFLY-PT polynomial using \texttt{lmpoly}~\parencite{Ewing_1991,Ewing1997} of size up to 230 crossings. There are two reasons that we have not investigated beyond this size. First, the version of \texttt{lmpoly} included with plCurve~\parencite{PlCurve} will not compute HOMFLY-PT polynomials of diagrams of more than \(255\) crossings. Second, the difficulty of computing the HOMFLY-PT polynomial increases dramatically with the number of crossings; the algorithm employed by \texttt{lmpoly} has exponential run time\parencite{Ewing_1991} and computation is known to be NP-hard. We gathered data using two runs (from the same tuning data for \(1 \le n \le 230\) with \(f < 10^{-8}\) and \(\Delta = 0.99\)). In each run we took \(10^3\) steps between samples, and a total of \(1.15\times 10^{7}\) diagrams were sampled. Each run took approximately 9 hours to compute, with HOMFLY-PT polynomial calculation being the primary bottleneck. The histograms for sizes sampled are presented in Figure~\ref{fig:knotsamphist}. We note some caveats for these experiments:
\begin{enumerate}
\item The HOMFLY-PT polynomial is \emph{not} sufficient to distinguish all knot types, and indeed there are infinite families of knots demonstrating this~\parencite{Kanenobu_1986}. However, it is still unknown whether the unknot \(0_1\) is the only knot with trivial HOMFLY-PT polynomial; this is related to the still open Jones Conjecture~\parencite{Kauffman87}.
\item To facilitate a greater number of samples, we imposed a timeout on \texttt{lmpoly}'s HOMFLY-PT calculation of 10ms. See Figure~\ref{fig:homflyfailure}. Knots whose HOMFLY-PT failed to be calculated before this cutoff are still counted and categorized as unclassified. Hence it is possible that probabilities presented in our data which follows are smaller than the actual. There is evidence that these failures are rare for simple knots such as the trefoil and the unknot --- see Figure~\ref{fig:homfly_knots_failure}. We note that the uniform data presented alongside our MCMC sampled data was gathered at a larger timeout.  
\item We ignore chirality in these data. A chiral knot is a knot which is different than its mirror, such as the trefoil \(3_1\), while an achiral knot like the figure-eight knot \(4_1\) is equivalent to its mirror image. By symmetry (\emph{i.e.}\ by flipping all crossing signs simultaneously), it can be seen that the probability of drawing a chiral knot is equivalent to that for its mirror image. For the case of the chiral granny \(3_1 \# 3_1\) and achiral square \(3_1 \# 3_1^*\) composite knots, our data suggests that the square knot is as likely as either chiral image of the granny knot; we hence suppress data for the granny knot.
\end{enumerate}

\begin{figure}
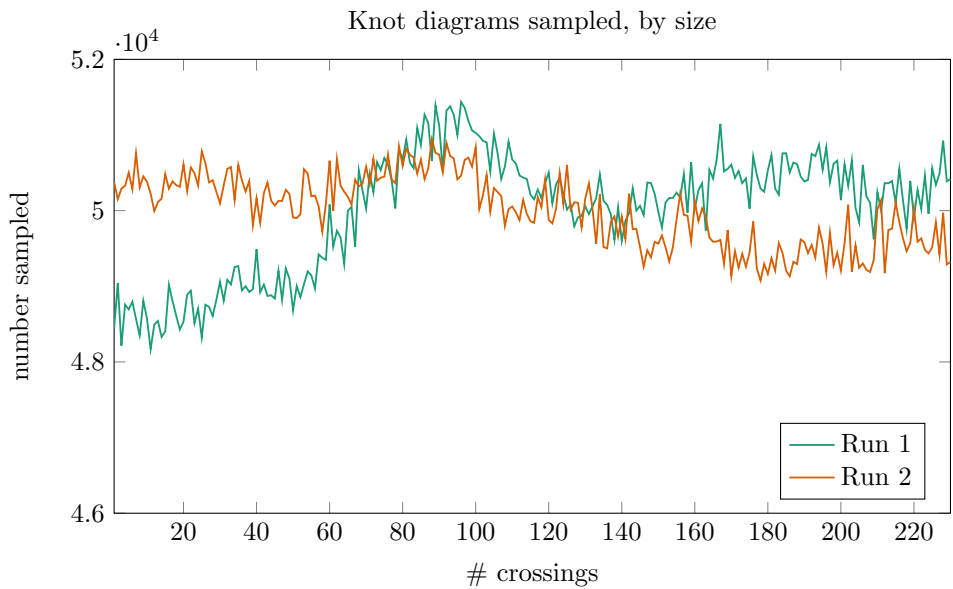

  \centering
  \includestandalone{knotsamphist}
  \caption{Histograms of sample sizes for two datasets used to examine random knotting.}
  \label{fig:knotsamphist}
\end{figure}

\begin{figure}
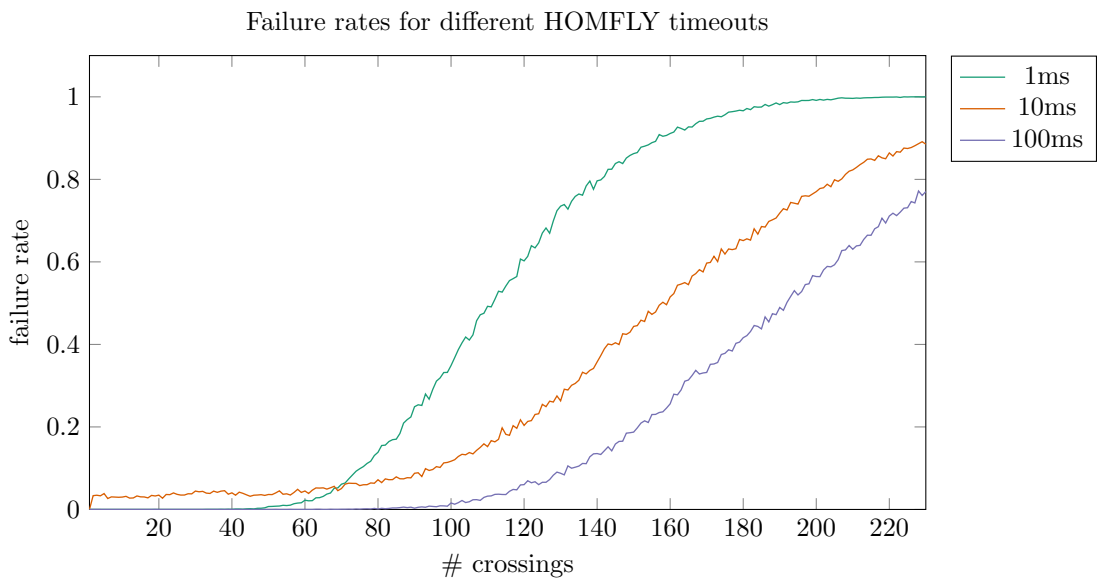

  \centering
  \includestandalone{homfly_timeout_null}
  \caption{The time taken to compute the HOMFLY-PT polynomial increases dramatically with the complexity of the knot diagram. Accordingly, we imposed a time-cutoff on these computations and if the invariant was not computed before the indicated time then it was left unclassified. In this figure we show the proportion of knot diagrams that failed to be classified as a function of the number of crossing and the cutoff time. }
  \label{fig:homflyfailure}
\end{figure}

\begin{figure}
  \centering
  \includestandalone{01_timeouts}
  \includestandalone{31_timeouts}
  \caption{The probability of a knot being classified as an unknot and a trefoil as a function of the number of crossings and the time-cutoff of the HOMFLY-PT computation. This data shows that increasing the cutoff does not significantly affect this probability. This is consistent with the hypothesis that most HOMFLY-PT computation failures are for complicated knots.}
  \label{fig:homfly_knots_failure}
\end{figure}

\begin{figure}
  \centering
  \includestandalone[width=2in]{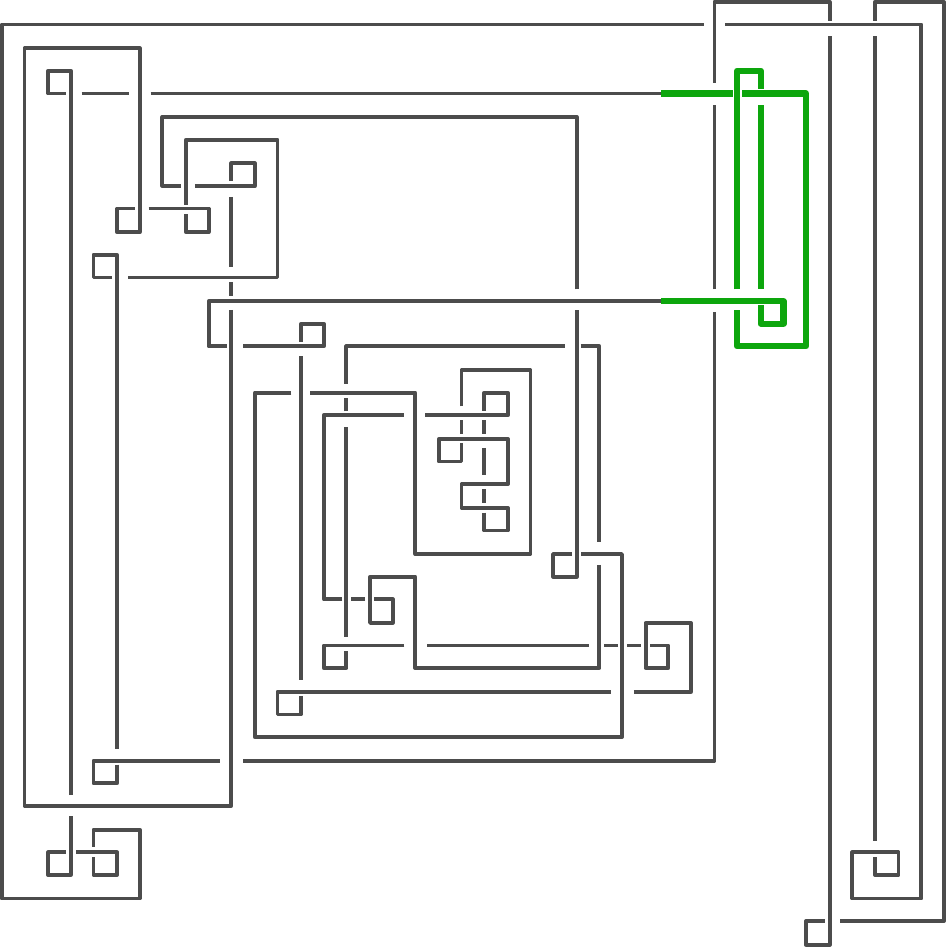}
  \caption{A random trefoil knot diagram of 50 crossings; the knotted portion of the curve is highlighted. We see that the knotted portion is quite small and this is expected to be typical.}
  \label{fig:localtrefoil}
\end{figure}

It is believed that for lattice models of random knots the number \(\kappa_n(K)\) of knot diagrams with fixed knot type \(K\) has asymptotic growth rate 
\begin{equation} 
\kappa_n(K) \sim C_K\tau_K^n n^{\alpha_K+N_K}, 
\end{equation}
where \(C_K\) depends on the knot type and \(N_K\) is the number of prime
components making up the knot type \(K\). It is believed that the constant
\(\tau_K\) does not depend on the knot
type~\parencite{Rensburg_1991,Deguchi_1997,Orlandini1998}. It has been proved
that \( \tau_0 \) exists for many lattice models (this follows from standard
supermultiplicativity arguments) and also for random knot
diagrams~\parencite{Chapman2016}, however it is still an open problem to prove
the existence of \(\tau_K\) for any other knot type. It is known, however, that
\( \tau_0 \) is strictly smaller than \( 2\mu \) for random knot diagrams~\parencite{Chapman2016}; which is comparable to a similar result for self-avoiding polygons~\parencite{Sumners_1988, Pippenger89}. There is strong numerical evidence for self-avoiding polygons that the exponent \(\alpha_K \) is independent of knot type~\parencite{Orlandini1998,Rensburg2011}. Consequently we conjecture that for random knot diagrams that 
\begin{equation} 
\kappa_n(K) \sim C_K \tau_0^n n^{\alpha+N_K}, 
\end{equation}
where \(\alpha = \gamma-2\), where \(\gamma\) is the same ``universal'' critical exponent~\parencite{Schaeffer2004} in the asymptotic formula for plane curves. This asymptotic form is consistent with the idea that the knotted portion of a random knot diagram is localized --- see Figure~\ref{fig:localtrefoil}.

Under this assumption, the probability that a random knot diagram of size \(n\) exhibits knot type \(K\) scales as
\begin{equation}
  p_n(K) = \kappa_n(K)/\kappa_n \sim D_K \rho^n n^{N_K},
  \label{eqn:knotprob}
\end{equation}
where \(0 < \rho < 1\). We plot knot probability data from both our Wang Landau sampler and rejection sampling in Figure~\ref{fig:knotcompares}. We see that both sampling methods agree and that the data is consistent with the scaling form in Equation~\ref{eqn:knotprob}. In particular, in Figure~\ref{fig:loglinear_knots}, we plot the logarithm of the knotting probabilities divided by $n^{N_K}$ and see that the resulting slopes are extremely similar. A simple linear regression of this data shows that $\rho \approx 0.95$. This is evidence that the growth rate of random knot diagrams of fixed knot type $K$ is independent of $K$ and that 
\begin{align}
\tau_K = \tau_0 \approx 2\times 11.41 \times 0.95 = 21.7.
\end{align}
The authors intend to test this hypothesis further in future work.

\begin{figure}
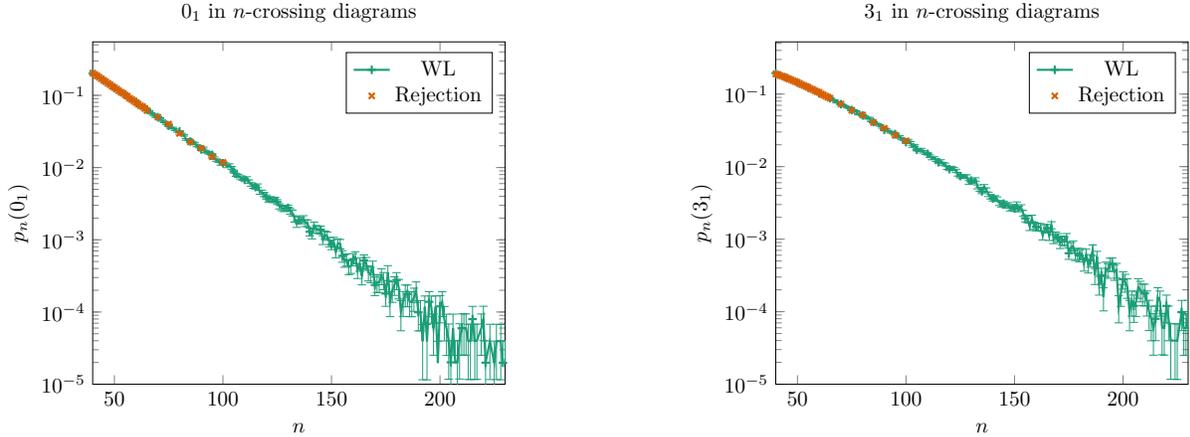

  \centering
  \begin{subfigure}[t]{0.45\linewidth}
    \includestandalone{01fit}
  \end{subfigure}\hfill
    \begin{subfigure}[t]{0.45\linewidth}
    \includestandalone{31fit}
  \end{subfigure}\hfill
  \caption{Plots for a WL sample against the data for a rejection sampler in~\parencite{Chapman2016}. Plots for other knot types are similar, and rejection data is consistent with WL sampled data throughout.}
  \label{fig:knotcompares}
\end{figure}

\begin{figure}
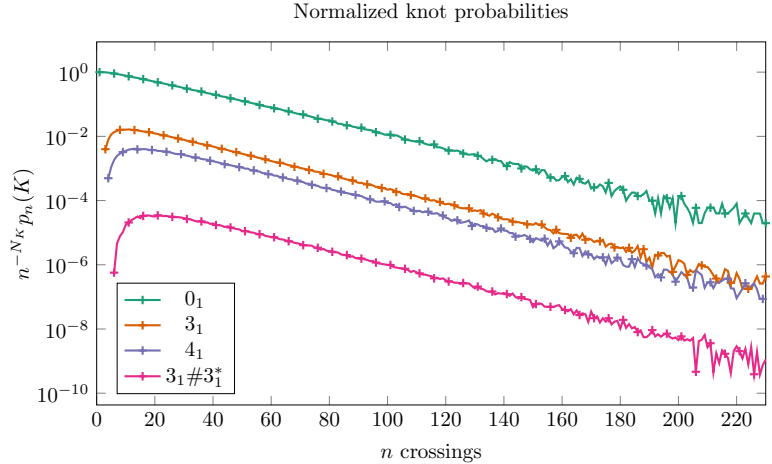

  \centering
  \includestandalone{knot_slopes}
  \caption{After normalizing by a factor of \(n^{-N_K}\) (where \(N_K\) is the number of prime components of the knot type \(K\), knot probabilities are all approximately log-linear.}
  \label{fig:loglinear_knots}
\end{figure}

As a final comparison to the uniform sampler data, we compute ratios of knot probabilities as in~\parencite{Rensburg2011}. Namely, the expected growth rates of knot probabilities has that, for two knot types \(K\) and \(L\), the ratio of probabilities should obey,
\begin{equation} 
\frac{p_n(K)}{p_n(L)} \sim \frac{D_K \rho^n n^{N_K}}{D_L \rho^n n^{N_L}} = \frac{D_K}{D_L}n^{N_K-N_L}
\end{equation}
Hence we expect $p_n(K)/p_n(L) \cdot n^{N_L - n_K}$ to tend to a constant as n increases. We plot this data for ratios of prime knots in Figure~\ref{fig:primeknotrat}, and for ratios of unknots to prime knots Figure~\ref{fig:unknotrats}. We also show the ratios of square knots to trefoils and unknots in Figure~\ref{fig:squarerats}. While this ratio data is noisy for larger $n$ it is consistent with the above scaling form.

\begin{figure}
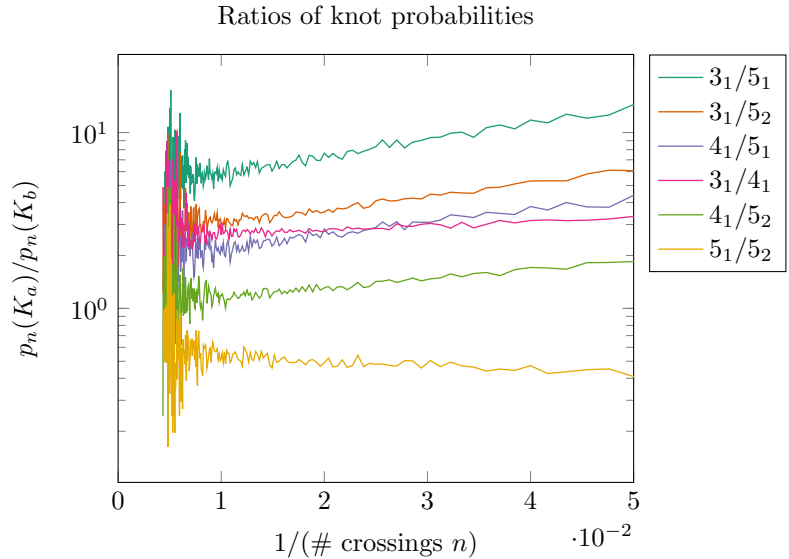

  \centering
  \includestandalone{knotrats}
  \caption{Ratios of probabilities of prime knot types in diagrams. Probabilities within each ratio are taken from independent runs of the MCMC sampler. Legend entries are sorted by their values at \(1/n = 0.05\).}
  \label{fig:primeknotrat}
\end{figure}

\begin{figure}
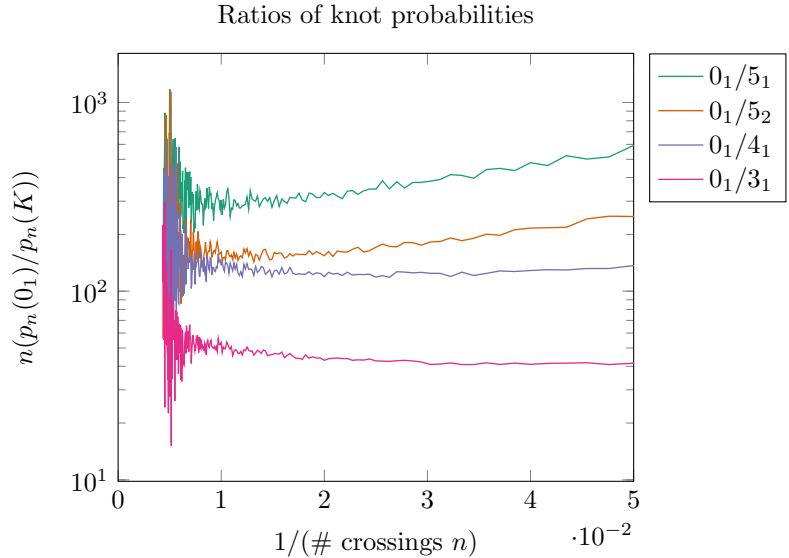

  \centering
  \includestandalone{unknotrats}
  \caption{Ratios of probabilities of the unknot with prime knots, with correction factor \(n\). Probabilities within each ratio are taken from independent runs of the MCMC sampler. Legend entries are sorted by their values at \(1/n = 0.05\).}
  \label{fig:unknotrats}
\end{figure}

\begin{figure}
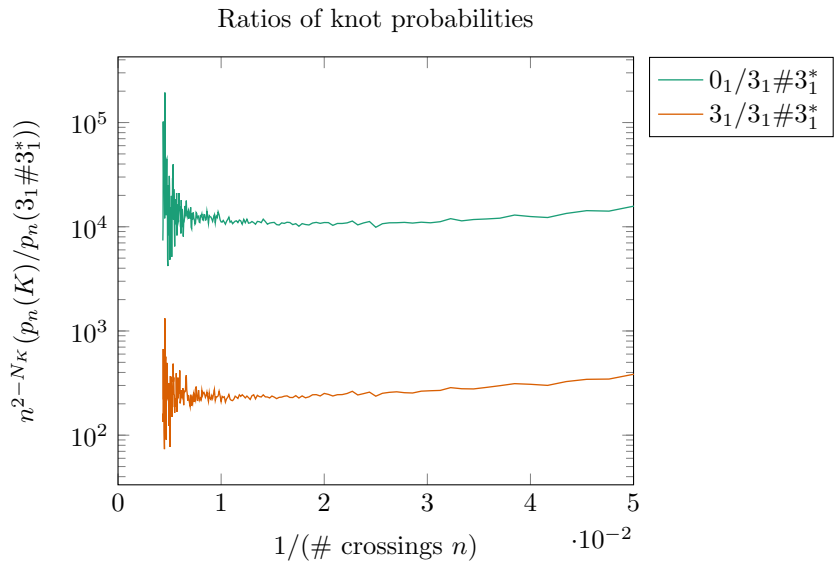

  \centering
  \includestandalone{squarerats}
  \caption{Ratios of probabilities of the square knots to trefoils and unknots (with corrections of \(n\) and \(n^2\) respectively. Probabilities within each ratio are taken from independent runs of the MCMC sampler. Legend entries are sorted by their values at \(1/n = 0.05\).}
  \label{fig:squarerats}
\end{figure}


\section{Conclusion}
\label{sec:conclusion}

We have described a new Markov chain Monte Carlo  method for sampling random plane curves efficiently. This then trivially extends to sample random knot diagrams by mapping vertices to crossings. This enables us to sample of large knot diagrams which are otherwise simply too rare to sample by rejection sampling methods. Due to the difficulty of tuning the Boltzmann MCMC to sample diagrams of a wide range of sizes, we modified our original Markov chain to use the flat histogram methods of Wang and Landau. This results in a chain that samples approximately uniformly across a wide range of sizes, and additionally gives estimates of the numbers of plane curves and knot diagrams. These estimated counts are in agreement with previously conjectured asymptotic forms.  Hence we conclude that our MCMC implementation can attain similar accuracy of the uniform rejection methods in far less time.  We have then tested the data from the Wang-Landau chain against data from a rejection sampler and find strong agreement across a wide range of statistics. 

Plane curves are a subset of 4-valent maps and it is not well undestood how different these two sets of objects are. With this in mind, we computed the average maximum face degree and face degree degree distributions and find significant differences.  One of the main aims of sampling random plane curves is to study random knotting. Our Wang-Landau sampler allows us to generate significant numbers of random knot diagrams of a range of sizes. We have then classified the knot types of those diagrams using HOMFLY-PT polynomials. This data allows us to conjecture that random knot diagrams have very similar asymptotic behaviour to other models of random knotting, such as self-avoiding polygons. These asymptotic forms are consistent with the idea that the knotted portion of a random knot diagram is quite localized. We also give evidence that the HOMFLY-PT polynomial software \parencite{Ewing1997} that we used struggles to compute invariants of complicated knots, but does succeed for simple knots, even when their embeddings might be very large.

\subsection{Future work}
\label{sec:otherobjects}

The Markov transitions presented here are based on ``shadows'' of the Reidemeister moves on \emph{knot diagrams}. It is thus natural to consider a Markov chain on knot diagrams generated by similar transitions corresponding to the proper Reidemeister moves, taking into account over-under signing of the diagram. This would produce a Markov process on the so-called Reidemeister graph~\parencite{Barbensi18}. Indeed, we expect this Markov chain to be ergodic (with ergodicity classes being fixed knot types), and we expect transitions to be of similar computational complexity. It would also have the advantage of not requiring a knot-classification step. We note, however, that Corollary~\ref{cor:capncross} fails in this case; it is known that there are diagrams who represent the same knot type, but whose transition paths all involve an increase in the number of crossings~\parencite{Kauffman2006}. To make matters worse, unlike the spaces of plane curves where the diameter has \(n^{3/2}\) growth, the upper bound on the diameters of the spaces of knot diagrams is far larger~\parencite{hass2001, Nowik2009, lackenby2015}. Hence in the case of knot diagrams, care must be taken to ensure that there are satisfactory parameters for the Markov chain to converge to the uniform distribution in a reasonable amount of time. It should also be noted that rejection sampling becomes even less efficient for sampling fixed knot types, since not only are knot diagrams exponentially rare in the space of 4-valent maps, but knot diagrams of a specific type are exponentially rare in the space of knot diagrams.

Beyond sampling knot diagrams with fixed knot type, the flat Reidemeister moves discussed in this paper also apply to planar immersions of any fixed number of circles. These diagrams are called \emph{link shadows}; the smallest such object is the unique 2-crossing 4-valent planar map of 2 link components. In this case, the Markov chain is still ergodic (the proof of Theorem~\ref{thm:ergodic} is not affected by the immersion having a different number of link components). Hence this technique could be used to sample large immersions of any fixed number of link components. We could also restrict or alter the transitions; for instance we could remove the shadow Reidemeister I move, whence the ergodicity classes of the Markov chain would be spherical curves of fixed spherical Whitney number~\parencite{Arnold1995,Nowik2009}. Further, by using Reidemeister moves instead of flat Reidemeister moves, one could also sample link diagrams of fixed link types.

\section*{Acknowledgements}

The authors would like to thank the Pacific Institute of Mathematics Sciences for funding the collaborative research group (CRG) on Applied Combinatorics. Part of this work started at a summer school on applied combinatorics funded by that CRG. The second author acknowledges funding from NSERC in the form of a Discovery Grant. The authors would also like to thank Chris Soteros, Jason Cantarella, and Stu Whittington for many helpful discussions.

\begingroup
\raggedright{}
\sloppy
\printbibliography{}
\endgroup

\appendix

\section{Complete detailed balance proofs}
\label{sec:detailedbalanceproofs}

\begin{proof}[Detailed balance equations for Boltzmann Markov chain; Theorem~\ref{thm:ergodic}]
  We check the detailed balance equations for each transition:
  \begin{enumerate}
  \item Suppose that \(N = \RIp(D,a)\) with root flag \(b\). This means that \(N\)
    is unique in that \(D = \RIm(N,b)\). Then
    \begin{align*}
      \Trprb(D \to N)\Prb(D) &= \Trprb(N \to D)\Prb(N) \\
      \frac {zp_1}{2}z^n &= \frac {p_1}{2}z^{n+1},
    \end{align*}
    so the equation holds.
    
  \item Suppose that \(N = \RIIp(D,a,a')\) with root flag \(b\). This means that
    \(N\) is unique in that \(D = \RIIm(N,b)\). The flags \(a,
    a'\) lie along a face in \(D\) of degree \(d\). Then
    \begin{align*}
      \Trprb(D \to N)\Prb(D) &= \Trprb(N \to D)\Prb(N) \\
      \frac{z^2p_2}{2(d-1)}z^n &= \frac{p_2}{2(d-1)}z^{n+2}.
    \end{align*}

  \item Suppose that \(N \ne \RIII(D,a)\) and that \(N\) is a re-rooting of \(D\).
    Then,
    \begin{align*}
      \Trprb(D \to N)\Prb(D) &= \Trprb(N \to D)\Prb(N) \\
      (1-(p_1+p_2+p_3))\frac{\Aut{D}}{4n}z^n &= (1-(p_1+p_2+p_3))\frac{\Aut{N}}{4n}z^n.
    \end{align*}

    Because \(D\) and \(N\) differ only by a re-rooting, their underlying number of automorphisms are the same; \(\Aut{D} = \Aut{N}\). Hence equality follows.


  \item Suppose that \(N = \RIII(D,a)\) has root \(b\) and that \(N\) is \emph{not} a re-rooting of \(D\). Then \(N\) is unique in that \(D = \RIII(N, b)\), so
    \begin{align*}
      \Trprb(D \to N)\Prb(D) &= \Trprb(N \to D)\Prb(N) \\
      z^np_3 &= z^np_3.
    \end{align*}

  \item If \(N = \RIII(D,a)\) has root \(b\) \emph{and} \(N\) is a re-rooting of \(D\), then the transition probabilities of the previous two cases are summed (as the different transitions are independent), so that
    \begin{align*}
      \Trprb(D \to N)\Prb(D)
      &= \Trprb(N \to D)\Prb(N) \\
      \left(p_3 + (1-(p_1+p_2+p_3))\frac{\Aut{D}}{4n}\right)z^n
      &= \left(p_3 + (1-(p_1+p_2+p_3))\frac{\Aut{N}}{4n}\right)z^n.\qedhere
    \end{align*}

  \end{enumerate}
\end{proof}

\begin{proof}[Proof of Corollary~\ref{cor:noreroot}]
  As this Markov chain can perform all flat Reidemeister transitions and achieve all curve rootings, this Markov chain explores the space of curves. It remains to show that detailed balance holds.

  For a pair of diagrams \(D, N\), let \(\Trprb_p(D\to N)\) denote the probability of transitioning from \(D\) to \(N\) under this modified Markov chain. Let \(\Trprb'_p(D\to N)\) be the probability of transitioning from \(D\) to \(N\) under the original Markov chain (no interstitial re-rooting). Finally, let \(\Trprb'_0(D\to N)\) be the probability of transitioning from \(D\) to \(N\) under the original Markov chain with all \(p_i = 0\) (only re-roots are performed).

  Notice that \(\Trprb_p(D \to N) = \sum_{B}\sum_{C}\Trprb'_0(C \to N)\Trprb'_p(B \to C)\Trprb'_0(D\to B)\), where the sums are over all rooted curves. Then,
  \begin{subequations}
  \begin{align}
    \Trprb_p(D \to N)\Prb(D)
    &= \sum_{B}\sum_{C}\Trprb'_0(C \to N)\Trprb'_p(B \to C)\Trprb'_0(D\to B)\Prb(D) \\
    &= \sum_{B}\sum_{C}\Trprb'_0(C \to N)\Trprb'_p(B \to C)\Trprb'_0(B\to D)\Prb(B) \\
    &= \sum_{B}\sum_{C}\Trprb'_0(C \to N)\Trprb'_p(C \to B)\Trprb'_0(B\to D)\Prb(C) \\
    &= \sum_{B}\sum_{C}\Trprb'_0(N \to C)\Trprb'_p(C \to B)\Trprb'_0(B\to D)\Prb(N) \\
    &= \Trprb_p(N \to D)\Prb(N),
  \end{align}
  \end{subequations}
  so detailed balance holds for the modified Markov chain, and hence it is ergodic.
\end{proof}

\begin{proof}[Detailed balance equations for Wang-Landau Markov chain; Theorem~\ref{thm:wlergodic}]
  The main concern now is that transitions must pass an additional Metropolis-Hastings check of \(\min\left\{ 1, g_n/g_m \right\}\). Note that for any \(g_n, g_m > 0\),
  \begin{equation} \frac{\min\left\{ 1, g_n/g_m \right\}}{\min\left\{ 1, g_m/g_n \right\}} = \frac {g_n}{g_m}. \end{equation}
  We check the detailed balance equations for each transition:
  \begin{enumerate}
  \item Suppose that \(N = \RIp(D,a)\) with root flag \(b\). This means that \(N\)
    is unique in that \(D = \RIm(N,b)\). Then
    \begin{align*}
      \Trprb(D \to N)\Prb(D)
      &= \Trprb(N \to D)\Prb(N) \\
      \min\left\{1, \frac{g_n}{g_{n+1}}\right\} \frac {p_1}{2} \frac{1}{Rg_n}
      &= \min\left\{1, \frac{g_{n+1}}{g_{n}}\right\}\frac {p_1}{2}\frac{1}{Rg_{n+1}} \\
      \frac{\min\left\{1, g_{n}/g_{n+1}\right\}}{\min\left\{1, g_{n+1}/g_{n}\right\}}\frac {p_1}{2} \frac{1}{Rg_n} 
      &= \frac {p_1}{2}\frac{1}{Rg_{n+1}} \\
      \frac{g_n}{g_{n+1}} \frac {p_1}{2} \frac{1}{Rg_n}
      &= \frac {p_1}{2}\frac{1}{Rg_{n+1}},
    \end{align*}
    so the equation holds.
  \item Suppose that \(N = \RIIp(D,a,a')\) with root flag \(b\). This means that
    \(N\) is unique in that \(D = \RIIm(N,b)\). The flags \(a,
    a'\) lie along a face in \(D\) of degree \(d\). Then
    \begin{align*}
      \Trprb(D \to N)\Prb(D)
      &= \Trprb(N \to D)\Prb(N) \\
      \max\left\{1, \frac{g_n}{g_{n+2}} \right\}\frac{p_2}{2(d-1)}\frac{1}{Rg_n}
      &= \max\left\{1, \frac{g_{n+2}}{g_{n}} \right\}\frac{p_2}{2(d-1)}\frac{1}{Rg_{n+2}}\\
      \frac{g_n}{g_{n+2}}\frac{p_2}{2(d-1)}\frac{1}{Rg_n}
      &= \frac{p_2}{2(d-1)}\frac{1}{Rg_{n+2}}.
    \end{align*}

  \item Suppose that \(N \ne \RIII(D,a)\) and that \(N\) is a re-rooting of \(D\).
    Then,
    \begin{align*}
      \Trprb(D \to N)\Prb(D) &= \Trprb(N \to D)\Prb(N) \\
      (1-(p_1+p_2+p_3))\frac{\Aut{D}}{4n}\frac{1}{Rg_n} &= (1-(p_1+p_2+p_3))\frac{\Aut{N}}{4n}\frac{1}{Rg_n}.
    \end{align*}

    Because \(D\) and \(N\) differ only by a re-rooting, their underlying number of automorphisms are the same; \(\Aut{D} = \Aut{N}\). Hence equality follows.


  \item Suppose that \(N = \RIII(D,a)\) has root \(b\) and that \(N\) is \emph{not} a re-rooting of \(D\). Then \(N\) is unique in that \(D = \RIII(N, b)\), so
    \begin{align*}
      \Trprb(D \to N)\Prb(D) &= \Trprb(N \to D)\Prb(N) \\
      \frac{p_3}{Rg_n} &= \frac{p_3}{Rg_n}.
    \end{align*}

  \item If \(N = \RIII(D,a)\) has root \(b\) \emph{and} \(N\) is a re-rooting of \(D\), then the transition probabilities of the previous two cases are summed (as the different transitions are independent), so that
    \begin{align*}
      \Trprb(D \to N)\Prb(D)
      &= \Trprb(N \to D)\Prb(N) \\
      \left(p_3 + (1-(p_1+p_2+p_3))\frac{\Aut{D}}{4n}\right)\frac{1}{Rg_n}
      &= \left(p_3 + (1-(p_1+p_2+p_3))\frac{\Aut{N}}{4n}\right)\frac{1}{Rg_n}.
    \end{align*}

\end{enumerate}
In all other cases, the transition probabilities are symmetrically zero. Hence we conclude that detailed balance holds with the hypothesized probability distribution.
\end{proof}

\end{document}
